\newcommand{\Q}{\mathfrak{Q}}
\newcommand{\Hc}{\mathcal{H}}
\newcommand{\M}{\mathcal{M}}
\DeclareMathOperator{\diver}{div}
\DeclareMathOperator{\cut}{cut}
\DeclareMathOperator{\Hess}{\nabla^2}
\newcommand{\Hessij}{\nabla^2_{ij}}
\DeclareMathOperator{\dist}{dist}
\DeclareMathOperator{\diam}{diam}
\DeclareMathOperator{\R}{\mathbb{R}}
\DeclareMathOperator{\HH}{\mathbb{H}}
\DeclareMathOperator{\Ricc}{Ricc}
\newcommand{\W}{\Omega}
\newcommand{\norm}[1]{\left\| #1 \right\|}
\newcommand{\modulo}[1]{\left| #1 \right|}
\newcommand{\cl}{\mathscr{C}}
\newcommand{\ds}{\displaystyle}
\newcommand{\Ei}{\ds\mathsmaller{\frac{\partial}{\partial x_i}}}
\newcommand{\Ej}{\ds\mathsmaller{\frac{\partial}{\partial x_j}}}
\newcommand{\Dz}{\partial_z}
\newcommand{\Dj}{\partial_j}
\newcommand{\parcial}[2]{\frac{\partial#1}{\partial#2}}
\newcommand{\escalar}[2]{{\left\langle #1,#2 \right\rangle}}
\newcommand{\lrarrow}{\longrightarrow}
\newcommand{\funciones}[5]{\begin{array}{rccl}
#1:&#2&\lrarrow&#3\\
&#4& \xmapsto{\phantom{\lrarrow}}&#5\end{array}}
\newtheorem{teo}{Theorem}
\newtheorem{obs}[teo]{Remark}
\newtheorem{prop}[teo]{Proposition}
\newtheorem{lema}[teo]{Lemma}
\newtheorem{cor}[teo]{Corollary}
\theoremstyle{definition}
\newtheorem{defi}[teo]{Definition}
\def\th@plain{%
  \thm@notefont{}
  \itshape 
}
\def\th@definition{%
  \thm@notefont{}
  \normalfont 
}
\let\oldr@@t\r@@t
\def\r@@t#1#2{%
\setbox0=\hbox{$\:\oldr@@t#1{#2\,}$}\dimen0=\ht0
\advance\dimen0-0.2\ht0
\setbox2=\hbox{\vrule height\ht0 depth -\dimen0}%
{\box0\lower0.4pt\box2}}
\LetLtxMacro{\oldsqrt}{\sqrt}
\renewcommand*{\sqrt}[2][]{\oldsqrt[#1]{#2}}
\newcommand\blfootnote[1]{%
  \begingroup
  \renewcommand\thefootnote{}\footnote{#1}%
  \addtocounter{footnote}{-1}%
  \endgroup
}
\begin{document}
\title{Sharp solvability criteria for Dirichlet problems of mean curvature type in Riemannian manifolds: non-existence results}
\author{Yunelsy N Alvarez\thanks{Supported by {CAPES and} CNPq of Brazil.} \and	Ricardo Sa Earp\thanks{Partially supported by CNPq of Brazil.}}
\maketitle

\blfootnote{\emph{2000 AMS Subject Classification:} 53C42, 49Q05, 35J25, 35J60.}
\blfootnote{\emph{Keywords and phrases:} {mean curvature equation;} {Dirichlet problems;} {Serrin condition;} {sectional curvature;} {Ricci curvature;} {radial curvature;} {distance functions;} {Laplacian comparison theorem;} {Hadamard manifolds;} {hyperbolic space.}}
\blfootnote{\emph{Acknowledgments.} The authors would like to thank the referee for the careful reading and the valuable and useful
suggestions.}

\begin{abstract}
It is well known that the \textit{Serrin condition} is a necessary condition for the solvability of the Dirichlet problem for the prescribed mean curvature equation in bounded domains of $\R^n$ with certain regularity. In this paper we investigate the sharpness of the Serrin condition for the vertical mean curvature equation in the product $ M^n \times \R $. Precisely, given a $\cl^2$ bounded domain $\W$ in $M$ and a function $ H = H (x, z) $ continuous in $\overline{\W}\times\R$ and non-decreasing in the variable $z$, we prove that the \textit{strong Serrin condition} $(n-1)\Hc_{\partial\W}(y)\geq n\sup\limits_{z\in\R}\modulo{H(y,z)} \ \forall \ y\in\partial\W $, is a necessary condition for the solvability of the Dirichlet problem in a large class of Riemannian manifolds within which are the Hadamard manifolds and manifolds whose sectional curvatures are bounded above by a positive constant. As a consequence of our results we deduce Jenkins-Serrin and Serrin type sharp solvability criteria.
\end{abstract}

\section{Introduction}
\markboth{Y. N. Alvarez and R. Sa Earp}{Introduction}
\label{secIntroducao}

\medskip

We denote by $M$ a complete Riemannian manifold of dimension $n\geq 2$ and let $\W$ be a domain in $M$. The focus of our work is the prescribed mean curvature equation for vertical graphs in $M\times\R$, that is,
\begin{equation}\label{operador_minimo_1}
\M u:=\diver \left(\dfrac{\nabla u}{W}\right) = nH(x,u),
\end{equation}
where $H$ is a continuous function over $\overline{\W}\times\R$ and it is non-decreasing in the variable $z$, $W=\sqrt{1+\norm{\nabla u (x)}^2}$ and {the gradient, the norm and divergence} are calculated with respect to the metric of $M$. In a coordinates system $(x_1,\dots,x_n)$ in $M$, it follows that
\begin{equation}\label{operador_minimo_1_coord}
\M u=\ds\frac{1}{W}\sum_{i,j=1}^n \left(\sigma^{ij} - \frac{u^iu^j}{W^2} \right)\Hessij u=nH(x,u),
\end{equation}
where $(\sigma^{ij})$ is the inverse of the metric $(\sigma_{ij})$ of $M$, $u^i=\ds\sum_{j=1}^n\sigma^{ij} \Dj u$ are the coordinates of $\nabla u$ and $\Hessij u(x)=\Hess u(x){\left(\Ei,\Ej\right)}$. We will denote by $\Q$ the operator defined by
$$
\Q u = \M u -n H(x,u).
$$
We notice that the coefficient matrix of the operator $\M$ {(that is, the matrix whose entries are the coefficients of the second derivatives)} is given by $A=\frac{1}{W}g$, where $g$ is the induced metric on the graph of $u$. This implies that the eigenvalues of $A$ are positive and depend on $x$ and on $\nabla u$. Hence, $\M$ is locally uniformly elliptic. Furthermore, if $\W$ is bounded and $u\in\cl^{1}(\overline{\W})$, then $\M$ is uniformly elliptic in $\overline{\W}$ (see \cite{spruck} for more details).

It has been proved in chronological order by Finn \cite{Finn1965}, Jenkins-Serrin {\cite{Serrin1968}} and Serrin \cite{Serrin}, that the very well known Serrin condition is a necessary condition for the solvability of the Dirichlet problem for equation
\eqref{operador_minimo_1} in bounded domains of $\R^n$. 

Dirichlet problems for equations whose solutions describe hypersurfaces of prescribed mean curvature have been studied in manifolds different from the Euclidean space. Several works have considered Serrin type conditions that provide some existence theorems (see \cite{Aiolfi}, \cite{Alias2008}, \cite{Dajczer2008}, \cite{Dajczer2005}, \cite{eliasarticle},  \cite{Lopez2001}, \cite{Nitsche2002} and \cite{spruck} as examples). However, non-existence theorems have been only investigated in a few cases that we summarize below. 

For instance, P.-A Nitsche \cite{Nitsche2002} was concerned with graph-like prescribed mean curvature hypersurfaces in hyperbolic space $\HH^{n+1}$. In the half-space setting, he studied radial graphs over the totally geodesic hypersurface $S = \{x \in \R^{n+1}_+; (x_0)^2 + \dots + (x_n)^2 = 1\}$. He established an existence result if $\W$ is a bounded domain of $S$ of class $\cl^{2,\alpha}$ and $H\in\cl^1(\overline{\W})$ is a function satisfying $\sup\limits_{\overline{\W}}\modulo{H}\leq 1$ and $\modulo{H(y)}<\Hc_{C}(y)$ everywhere on $\partial\W$, where $\Hc_{C}$ denotes the hyperbolic mean curvature of the cylinder $C$ {spanned by the rays issuing from the origin of $\R^{n+1}$ and intersecting $\partial\W$}. Furthermore, he showed the existence of smooth boundary data such that no solution exists in case of $\modulo{H(y)}>\Hc_{C}(y)$ for some $y\in\partial\W$ under the assumption that $H$ has a sign. We observe that {these} results do not provide a Serrin type solvability criterion.

Also in the half-space model of $\HH^{n+1}$, E. M. Guio-R. Sa Earp \cite{elias,eliasarticle} considered a bounded domain $\W$ contained in a vertical totally geodesic hyperplane $P$ of $\HH^{n+1}$ and studied the Dirichlet problem for the mean curvature equation for horizontal graphs over $\W$, that is, hypersurfaces which intersect at most only once the horizontal horocycles orthogonal to $\W$. They considered the hyperbolic cylinder $C$ generated by horocycles cutting ortogonally $P$ along the boundary of $\W$ and the Serrin condition, $\Hc_C(y) \geq \modulo{H(y)}$ $\forall\ y\in\partial\W$. They obtained a Serrin type solvability criterion for prescribed mean curvature $H=H(x)$ and also proved a sharp solvability criterion for constant $H$.

Finally, M. Telichevesky {\cite[Th. 6 p. 246]{miriam}} proved that if $M$ is a Hadamard manifold {whose sectional curvature is bounded above by $-1$}, then mean convexity is a necessary condition for the existence of a vertical minimal graphs in $M\times\R$ over a domain $\W$ of $M$ possibly unbounded. {The combination of this result} with an existence result of Aiolfi-Ripoll-Soret {\cite[Th. 1 p. 72]{Aiolfi}} gives sharp solvability criterion for the minimal hypersurface equation in bounded domains of $M$.

To the best of our knowledge, no other {non-existence result and} Serrin-type solvability criterion have been proved {in settings different from the Euclidean one. 

As a direct consequence of the main result of this paper, Theorem \ref{M_nao_exist_MxR_Hxz}, the aforementioned result in the $M\times\R$ context is generalized.  
More precisely, the combination of the existence result of Aiolfi-Ripoll-Soret {\cite[Th. 1 p. 72]{Aiolfi}} for the minimal case with Collorary \ref{M_nao_exist_MxR_Hxz_Hadamard} shows that the sharp solvability criterion of Jenkins-Serrin {\cite[Th. 1 p. 171]{Serrin1968}} actually holds in every Cartan-Hadamard manifold:
\begin{teo}[Sharp Jenkins-Serrin-type solvability criterion]\label{cond_nece_suf_minimo_Hadamard}
Let $M$ be a Cartan-Hadamard manifold and $\W\subset M$ a bounded domain whose boundary is of class $\cl^{2,\alpha}$ for some $\alpha\in(0,1)$. Then the Dirichlet problem for equation $\M u=0$ in $\W$ has a unique solution for arbitrary continuous boundary data if, and only if, $\W$ is {mean convex}.
\end{teo}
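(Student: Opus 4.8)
The plan is to read off the equivalence from two results already available: the existence theorem of Aiolfi-Ripoll-Soret \cite[Th.~1 p.~72]{Aiolfi} for the sufficiency, and Corollary \ref{M_nao_exist_MxR_Hxz_Hadamard} for the necessity, with uniqueness supplied by the comparison principle for $\M$. The only point that requires genuine work is the upgrade from H\"older-continuous to arbitrary continuous boundary data in the existence statement.

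I would first record \emph{uniqueness and comparison}. As recalled in the introduction, $\M$ is quasilinear and elliptic (its coefficient matrix is $A=\frac1W g$, with $g$ the induced metric on the graph) and has no zeroth order term; moreover the flux $\xi\mapsto\xi/\sqrt{1+\modulo{\xi}^2}$ is strictly monotone. Hence, if $u,v\in\cl^{0}(\overline{\W})\cap\cl^{2}(\W)$ solve $\M u=\M v=0$ and $u\le v$ on $\partial\W$, then for each $\epsilon>0$ the superlevel set $\{u>v+\epsilon\}$ is compactly contained in $\W$, and on it $\M$ is uniformly elliptic, $u$ and $v$ are $\cl^2$ up to the boundary, and $\M(v+\epsilon)=0$; the classical comparison principle therefore forces that set to be empty, and letting $\epsilon\to 0$ gives $u\le v$ in $\W$. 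In particular a solution, when it exists, is unique, and $\sup_{\overline{\W}}\modulo{u-w}\le\sup_{\partial\W}\modulo{u-w}$ for any two solutions $u,w$.

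Next, \emph{sufficiency}: assume $\W$ is mean convex, i.e.\ $\Hc_{\partial\W}\ge 0$ on $\partial\W$. For a boundary datum $\varphi\in\cl^{2,\alpha}(\partial\W)$, \cite[Th.~1 p.~72]{Aiolfi} provides a solution $u_\varphi\in\cl^{2,\alpha}(\overline{\W})$ of $\M u=0$ with $u=\varphi$ on $\partial\W$. For an arbitrary $\varphi\in\cl^{0}(\partial\W)$ I would pick $\varphi_k\in\cl^{2,\alpha}(\partial\W)$ with $\varphi_k\to\varphi$ uniformly on $\partial\W$; by the estimate above, $\{u_{\varphi_k}\}$ is Cauchy in $\cl^{0}(\overline{\W})$ and converges uniformly to some $u\in\cl^{0}(\overline{\W})$ with $u=\varphi$ on $\partial\W$. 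The height bound $\sup_{\overline{\W}}\modulo{u_{\varphi_k}}\le\sup_{\partial\W}\modulo{\varphi_k}$ and the interior gradient estimate for minimal graphs in $M\times\R$ (see \cite{spruck}) give uniform $\cl^{1}$ bounds, hence by elliptic regularity uniform $\cl^{2,\alpha}$ bounds, on each compact subset of $\W$; letting $k\to\infty$ shows $u\in\cl^{2}(\W)$ solves $\M u=0$ in $\W$. Together with uniqueness, this proves the implication.

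Finally, \emph{necessity}, by contraposition. If $\W$ is not mean convex there is $y_0\in\partial\W$ with $\Hc_{\partial\W}(y_0)<0$. Taking $H\equiv 0$, the strong Serrin condition $(n-1)\Hc_{\partial\W}(y)\ge n\sup_{z\in\R}\modulo{H(y,z)}=0$ then fails at $y_0$; since every Cartan-Hadamard manifold lies in the class covered by the main result, Corollary \ref{M_nao_exist_MxR_Hxz_Hadamard} yields continuous boundary data for which $\M u=0$ in $\W$ has no solution. Hence solvability for all continuous boundary data forces $\Hc_{\partial\W}\ge 0$, i.e.\ $\W$ mean convex, closing the equivalence. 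The step I expect to be the main obstacle is the approximation argument in the sufficiency part, namely verifying that the height and interior gradient estimates in $M\times\R$ are uniform along $\{u_{\varphi_k}\}$; this is precisely where the geometry of the Cartan-Hadamard manifold $M$ (and the mean convexity of $\W$) is used. Necessity and uniqueness are then essentially formal.
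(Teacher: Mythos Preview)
Your proposal is correct and follows essentially the same route as the paper: sufficiency via Aiolfi--Ripoll--Soret \cite[Th.~1 p.~72]{Aiolfi} and necessity via Corollary~\ref{M_nao_exist_MxR_Hxz_Hadamard}. The paper simply states that Theorem~\ref{cond_nece_suf_minimo_Hadamard} follows from this combination without spelling out the upgrade from $\cl^{2,\alpha}$ to continuous boundary data or the uniqueness argument, so your write-up is in fact more detailed than the paper's; the approximation step you flag as the ``main obstacle'' is standard (height bound from the maximum principle for $H=0$, interior gradient estimate from \cite{spruck}, then Schauder) and does not really hinge on the Cartan--Hadamard hypothesis---that hypothesis is where the \emph{necessity} direction lives.
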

Furthermore, a sharp {Serrin type result {\cite[p. 416]{Serrin}}} for constant mean curvature vertical graphs is inferred by combining our Corollary \ref{M_nao_exist_MxR_Hxz_positive_curvature} with an existence result of Spruck {\cite[Th. 1.4 p. 787]{spruck}}:
\begin{teo}[Sharp Serrin-type solvability criterion]\label{cond_nece_suf_minimo_curv_positivo_este_este}
Let $M$ be a simply connected and compact manifold whose sectional curvature satisfies $\frac{1}{4} K_0 < K \leq K_0$ for a positive constant $K_0$. Let $\W\subset M$ be a domain with $\diam(\W)<\frac{\pi}{2\sqrt{K_0}}$ and whose boundary is of class $\cl^{2,\alpha}$ for some $\alpha\in(0,1)$. Then for every constant $H$ the Dirichlet problem for equation 
$\M u=nH$ in $\W$ has a unique solution for arbitrary continuous boundary data if, and only if, $(n-1)\Hc_{\partial\W}\geq n\modulo{H}$.
\end{teo}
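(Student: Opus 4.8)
The plan is to prove the two implications separately, each by matching hypotheses to an already‑available theorem, and then to dispose of uniqueness with a standard comparison argument; the real work lies in checking that the pinching and diameter assumptions are exactly what the quoted existence theorem needs.

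\emph{Necessity of the Serrin inequality.} I would argue by contraposition and feed the situation directly into Corollary \ref{M_nao_exist_MxR_Hxz_positive_curvature}. Since $M$ is compact with $K\le K_0$ for the positive constant $K_0$, and $\W$ is a $\cl^{2,\alpha}$ bounded domain with $\diam(\W)<\pi/(2\sqrt{K_0})$, the hypotheses of that corollary hold. Applied to the constant (hence non‑decreasing in $z$) datum $H(x,z)\equiv H$ it yields $(n-1)\Hc_{\partial\W}(y)\ge n\sup_{z\in\R}|H(y,z)|=n|H|$ at every $y\in\partial\W$. Thus if the strong Serrin condition failed at some boundary point, there would exist continuous boundary data for which the Dirichlet problem has no solution, contradicting solvability for arbitrary data.

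\emph{Sufficiency.} Here I would invoke Spruck's existence theorem \cite[Th. 1.4 p. 787]{spruck}. Assuming $(n-1)\Hc_{\partial\W}\ge n|H|$, i.e. $\Hc_{\partial\W}\ge\frac{n}{n-1}|H|$, the mean‑convexity hypothesis of Spruck's theorem is met; what remains are curvature bounds on $M$ and a smallness condition on $\W$, and the point is that the two‑sided pinching $\frac14 K_0<K\le K_0$ together with $\diam(\W)<\pi/(2\sqrt{K_0})$ is tailored to supply them. Concretely, the upper bound $K\le K_0$ and the diameter restriction confine $\W$ to a geodesic ball of radius $<\pi/(2\sqrt{K_0})$ — below the injectivity radius of a $\tfrac14$‑pinched simply connected manifold by Klingenberg's estimate — on which the distance‑based barriers of \cite{spruck} are defined and the Hessian comparison theorem gives the correct sign for the comparison functions, while the lower bound $K>\frac14 K_0$ provides the Ricci lower bound entering Spruck's interior gradient estimate. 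Once these checks are in place his theorem yields, for each $\varphi\in\cl^0(\partial\W)$, a solution $u\in\cl^0(\overline{\W})\cap\cl^\infty(\W)$ of $\M u=nH$ in $\W$ with $u|_{\partial\W}=\varphi$; replacing $u$ by $-u$ if necessary handles the sign of $H$.

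\emph{Uniqueness and the main obstacle.} Uniqueness I would get from the comparison principle for $\Q$: if $u_1,u_2\in\cl^0(\overline{\W})\cap\cl^2(\W)$ solve the problem with the same boundary data, then $w=u_1-u_2$ satisfies a linear elliptic equation whose zeroth‑order coefficient has a sign dictated by the monotonicity of $z\mapsto H(x,z)$, so $w$ can attain neither a positive interior maximum nor a negative interior minimum and hence $u_1\equiv u_2$. The hard part of the whole argument is the bookkeeping in the sufficiency step: one must reconcile the precise curvature/geometry hypotheses and the normalization of $H$ in Spruck's statement (whether it is phrased through an exterior‑ball condition, a diameter bound, or a convexity‑radius condition, and with which sign convention) with the hypotheses of the present theorem, so that the cited result applies verbatim. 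The two non‑existence inputs, by contrast, are immediate once Corollary \ref{M_nao_exist_MxR_Hxz_positive_curvature} is granted.
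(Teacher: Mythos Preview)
Your proposal is correct and follows exactly the route indicated in the paper: necessity via Corollary~\ref{M_nao_exist_MxR_Hxz_positive_curvature} applied to the constant (hence sign-definite and $z$-nondecreasing) datum $H$, and sufficiency via Spruck's existence theorem \cite[Th.~1.4 p.~787]{spruck}, with uniqueness by the standard comparison principle. The paper does not spell out the proof beyond naming these two ingredients, so your elaboration of the bookkeeping (Klingenberg's injectivity-radius estimate under $\tfrac14$-pinching to ensure $\cut(y_0)\cap\W=\emptyset$, and the sign switch $u\mapsto -u$) is a faithful expansion of what the authors intend.
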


Before stating the main result we need to introduce the concept of radial curvatures. 
\begin{defi}[{Greene-Wu \cite[p. 5]{greenewu}}]
Let $M$ be a complete Riemannian manifold and let $y_0\in M$ be a fixed point. A {\it radial plane} $\Pi_x$ at a point $x\in M $ is a two dimensional subspace of $T_{x} M$ containing a vector tangent to a minimizing geodesic segment $\beta$ emanating from $y_0$. The {\it radial sectional curvature} with respect to the radial plane $\Pi_x$ is the sectional curvature $K(\Pi_x)$. We say that the radial curvature of $M$ along the geodesic segment $\beta$ is bounded above by a constant $K_0$ if $K(\Pi_x)\leq  K_0$ for every radial plane $\Pi_x\subset T_x M$ and every point $x\in[\beta]$. 
\end{defi}
}

\begin{teo}[main theorem]\label{M_nao_exist_MxR_Hxz}
Let $\W\subset M$ be a bounded domain whose boundary is of class $\cl^2$. Let $H\in\cl^0(\overline{\W}\times\R)$ be a function either non-positive or non-negative and non-decreasing in the variable $z$. Let us assume that there exists $y_0\in\partial\W$ such that
$$(n-1) \Hc_{\partial\W}(y_0) < n \ds\sup_{z\in\R}\modulo{H(y_0,z)}.$$
Suppose also that $\cut(y_0)\cap\W=\emptyset$. Furthermore, assume that the radial curvature over the radial geodesics segments issuing from $y_0$ and intersecting $\W$ is bounded above by $K_0$, where
\begin{enumerate}[label=(\alph*)]
\item $K_0\leq 0$, or
\item $K_0>0$ and $\dist(y_0,x)<\dfrac{\pi}{2\sqrt{K_0}}$ for all $x\in\overline{\W}$.
\end{enumerate}
Then there exists $\varphi\in\cl^{\infty}(\overline{\W})$ such that there is no $u \in \cl^0(\overline{\Omega})\cap \cl^2 (\Omega)$ satisfying equation \eqref{operador_minimo_1} with $u = \varphi$ in $\partial \Omega$.
\end{teo}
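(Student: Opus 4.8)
The plan is to construct an explicit barrier that forces non-existence via the interior-gradient-blowup / boundary-gradient-estimate obstruction classical in Serrin's theory: if a solution $u$ with boundary data $\varphi$ existed, then near $y_0$ its graph would have to lie below a family of hypersurfaces of comparison mean curvature, but the geometry at $y_0$ (where the strong Serrin inequality fails) makes this impossible for a suitably chosen $\varphi$. Concretely, since $(n-1)\Hc_{\partial\W}(y_0) < n\sup_z|H(y_0,z)|$, by continuity there is a constant $H_0$ with $|H_0| \le \sup_z|H(y_0,z)|$ (achieved, say, for $z$ in some bounded window, using monotonicity and the fixed sign of $H$) and a small $\varepsilon>0$ such that on a neighborhood $U\subset\partial\W$ of $y_0$ one has $(n-1)\Hc_{\partial\W}(y) < n(|H_0|-\varepsilon)$. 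Without loss of generality assume $H \ge 0$ (the case $H \le 0$ is symmetric via $u \mapsto -u$), so $H_0 > 0$.

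First I would use the hypotheses $\cut(y_0)\cap\W=\emptyset$ and the radial curvature bound to get good control on the distance function $r(x) = \dist(y_0,x)$: on $\overline{\W}$ the function $r$ is smooth, and the Laplacian comparison theorem (in its radial-curvature form, as in Greene--Wu) gives a lower bound $\Delta r \ge (n-1)\, \mathrm{ct}_{K_0}(r)$, where $\mathrm{ct}_{K_0}$ is the generalized cotangent ($\coth$ if $K_0<0$ or $=0$ after rescaling, $\cot(\sqrt{K_0}\,r)$ if $K_0>0$); the restriction $\dist(y_0,x)<\pi/(2\sqrt{K_0})$ in case (b) is exactly what keeps $\cot(\sqrt{K_0}\,r)$ positive and the comparison meaningful. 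Then I would build the barrier as a function of $r$ alone: set $v(x) = \psi(r(x))$ for a profile $\psi$ solving an ODE so that the graph of $v$ has (downward) mean curvature exactly $nH_0$ computed with the comparison model — this is the standard "lower spherical cap / nodoid-type" barrier. Computing $\M v$ using \eqref{operador_minimo_1_coord} and the chain rule, $\M v = \frac{\psi''}{W^3} + \frac{\psi'}{W}\Delta r$ with $W = \sqrt{1+\psi'^2}$, and the Laplacian comparison inequality then yields $\M v \le nH_0 \le nH(x,v)$ wherever $\psi'<0$, provided $\psi$ is chosen as the model solution on the disk of radius $R$ where $\psi' \to -\infty$; the failure of the strong Serrin condition at $y_0$ guarantees that $\partial\W\cap U$ is "more convex toward $y_0$ than the barrier sphere allows," so a piece of $\partial\W$ near $y_0$ sits inside the region where the barrier has a vertical-tangent blowup.

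Next, I would choose $\varphi$: take $\varphi$ smooth on $\overline{\W}$, equal to a large constant $N$ on a small arc of $\partial\W$ around $y_0$ and small elsewhere, with $N$ large enough that the barrier $v$ (suitably vertically translated and with its blowup circle positioned just outside $\W$ near $y_0$) satisfies $v \le \varphi$ on $\partial\W$ but $v$ exceeds any prescribed bound on an interior neighborhood of $y_0$. Supposing $u\in\cl^0(\overline\W)\cap\cl^2(\W)$ solved $\Q u = 0$ with $u=\varphi$ on $\partial\W$, the comparison principle for $\M$ (using that $H$ is non-decreasing in $z$, which makes $\Q$ satisfy the maximum principle) applied on the subregion $\W\cap B_R(y_0)$ would give $u \ge v$ there; letting the barrier's blowup radius approach the critical value forces $u$ to be unbounded near $y_0$, contradicting $u\in\cl^0(\overline\W)$. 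Iterating over a sequence of barriers, or equivalently choosing $\varphi$'s jump $N$ past the threshold the barrier family produces, yields the desired $\varphi$.

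The main obstacle, and the step requiring the most care, is the geometric matching between the barrier and $\partial\W$ near $y_0$: one must verify that the inequality $(n-1)\Hc_{\partial\W}(y_0) < n(|H_0|-\varepsilon)$ translates precisely into the barrier sphere (of the model space with curvature $K_0$) being tangent to $\partial\W$ at $y_0$ from the correct side, so that the blowup annulus of $v$ genuinely pokes into $\W$. This is where the radial curvature bound (rather than a full sectional bound) must be exploited: the comparison is only along radial geodesics from $y_0$, so the barrier argument must be organized so that only $\Delta r$ along those geodesics enters, and one must check that the $\cl^2$ regularity of $\partial\W$ suffices to control $\Hc_{\partial\W}$ near $y_0$ uniformly. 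A secondary technical point is justifying the comparison principle up to the boundary portion where $v$ blows up — this is handled, as usual, by comparing on slightly shrunken subdomains and passing to the limit, using that $u$ is continuous on $\overline\W$ while $v\to+\infty$.
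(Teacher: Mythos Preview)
Your strategy has a genuine geometric gap. You build the barrier as $v=\psi(r)$ with $r(x)=\dist(y_0,x)$, and then you want the ``barrier sphere'' to be tangent to $\partial\W$ at $y_0$ from the outside so that its blowup annulus pokes into $\W$. But geodesic spheres \emph{centered at} $y_0\in\partial\W$ are never tangent to $\partial\W$ at $y_0$: they collapse to the point $y_0$ as the radius shrinks, and for small radii they meet $\partial\W$ transversally. Correspondingly, $\Hc_{\partial\W}(y_0)$ simply does not appear in the expression $\M v=\psi''/W^3+(\psi'/W)\,\Delta r$; only $\Delta r$, the Laplacian of the distance from the \emph{point} $y_0$, does. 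The classical Serrin picture you are invoking really centers the comparison sphere at a point $p$ on the \emph{outward} normal at $y_0$, with $p\neq y_0$; but then the relevant distance is $\dist(p,\cdot)$, and the hypotheses $\cut(y_0)\cap\W=\emptyset$ together with the radial-curvature bound along geodesics \emph{from $y_0$} tell you nothing about the smoothness or the Laplacian of $\dist(p,\cdot)$. A single radial barrier therefore cannot carry both the boundary information $\Hc_{\partial\W}(y_0)$ and the ambient-curvature information simultaneously; the ``matching'' you flag as the main obstacle cannot be carried out as written.

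The paper separates these two roles. In a first step the barrier is $\phi\bigl(d(x)\bigr)$ with $d(x)=\dist(x,S)$ for a hypersurface $S$ tangent to $\partial\W$ at $y_0$; here $\Delta d(y_0)=-(n-1)\Hc_S(y_0)$ is exactly where the boundary mean curvature enters, and the failure of the Serrin condition gives $\Delta d+nH(x,k)>\nu>0$ on a small ball $B_a(y_0)$, yielding
\[
u(y_0)\ \le\ \max\Bigl\{k,\ \sup_{\partial B_a(y_0)\cap\W}u\Bigr\}+\sqrt{\tfrac{2a}{\nu}}.
\]
No curvature hypothesis on $M$ is used here. In a second step the barrier is $\psi\bigl(\rho(x)\bigr)$ with $\rho(x)=\dist(y_0,x)$, applied on $\W\setminus B_a(y_0)$; \emph{this} is where the Laplacian comparison from the radial-curvature bound (and the diameter restriction in case (b)) is invoked, giving $\Delta\rho\ge c/\rho$ and hence
\[
\sup_{\partial B_a(y_0)\cap\W}u\ \le\ \sup_{\partial\W\setminus B_a(y_0)}u+\psi(a),\qquad \psi(a)\to 0\ \text{as }a\to 0.
\]
Combining the two yields the height estimate $u(y_0)<\max\{k,\ \sup_{\partial\W\setminus B_a(y_0)}u\}+\varepsilon$, and one then chooses $\varphi\in\cl^\infty(\overline\W)$ with $\varphi=k$ on $\partial\W\setminus B_a(y_0)$ and $\varphi(y_0)=k+\varepsilon$, which any solution would violate. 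Your proposal collapses these two steps into one radial construction, and that is precisely why the boundary mean curvature never enters your computation.
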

The statement ensures that the \textit{strong Serrin condition}
\begin{equation}\label{SerrinConditionGeneral}
(n-1)\Hc_{\partial\W}(y)\geq n\ds\sup_{z\in\R}\modulo{H(y,z)} \ \forall \ y\in\partial\W
\end{equation}
is a necessary condition for the solvability of the Dirichlet problem for equation \eqref{operador_minimo_1}.

{As a direct consequence of item (a) in Theorem \ref{M_nao_exist_MxR_Hxz} we infer the following result in Hadamard manifolds.} 
\begin{cor}\label{M_nao_exist_MxR_Hxz_Hadamard}
Let $M$ be a Cartan-Hadamard manifold and $\W\subset M$ a bounded domain whose boundary is of class $\cl^2$. Let
$H\in\cl^0(\overline{\W}\times\R)$ be a function either non-negative or non-positive and non-decreasing in the variable $z$. Suppose there exists $y_0\in\partial\W$ such that
$$(n-1) \Hc_{\partial\W}(y_0) < n \ds\sup_{z\in\R}\modulo{H(y_0,z)}.$$
Then there exists $\varphi\in\cl^{\infty}(\overline{\W})$ such that there is no $u \in \cl^0(\overline{\Omega})\cap \cl^2 (\Omega)$ satisfying equation \eqref{operador_minimo_1} with $u = \varphi$ in $\partial \Omega$.
\end{cor}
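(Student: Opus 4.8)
The plan is to obtain the Corollary as the special case $K_0=0$ of Theorem~\ref{M_nao_exist_MxR_Hxz}(a), so the only thing to do is to check that a Cartan--Hadamard manifold automatically satisfies the two geometric hypotheses of that theorem. First, by the Cartan--Hadamard theorem the exponential map $\exp_{y_0}\colon T_{y_0}M\to M$ is a diffeomorphism; in particular every geodesic issuing from $y_0$ minimizes length on all of $[0,+\infty)$, so $\cut(y_0)=\emptyset$. Hence $\cut(y_0)\cap\W=\emptyset$ holds trivially, and moreover $x\mapsto\dist(y_0,x)$ is smooth on $M\setminus\{y_0\}$, in particular on $\overline{\W}\setminus\{y_0\}$ (recall $y_0\in\partial\W$ and $\W$ is open, so $y_0\notin\W$). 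Second, since the sectional curvature of $M$ is nonpositive everywhere, a fortiori $K(\Pi_x)\le 0$ for every radial plane $\Pi_x$ along every minimizing geodesic segment issuing from $y_0$; thus the radial curvature over those segments is bounded above by $K_0=0$, which is exactly the situation of case (a). The remaining hypotheses --- $\W$ a bounded domain with $\cl^2$ boundary, $H\in\cl^0(\overline{\W}\times\R)$ either non-negative or non-positive and non-decreasing in $z$, and a point $y_0\in\partial\W$ with $(n-1)\Hc_{\partial\W}(y_0)<n\sup_{z}\modulo{H(y_0,z)}$ --- are identical with those of the Corollary. Theorem~\ref{M_nao_exist_MxR_Hxz} then produces the required $\varphi\in\cl^{\infty}(\overline{\W})$ for which the Dirichlet problem admits no solution.

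At the level of the Corollary there is nothing delicate: the two facts used are the classical consequences of the Cartan--Hadamard theorem (empty cut locus; globally smooth distance function), and the implication ``nonpositive sectional curvature $\Rightarrow$ radial curvature bounded above by $0$'' is immediate from the definition of radial sectional curvature. The only point to double-check is that the proof of Theorem~\ref{M_nao_exist_MxR_Hxz}(a) genuinely allows $K_0=0$ and not merely $K_0<0$; granting that, the deduction is complete.

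All the substance is therefore in Theorem~\ref{M_nao_exist_MxR_Hxz} itself, whose proof I would organize as follows: reduce to the case $H\ge 0$ by the symmetry $u\mapsto -u$, $z\mapsto -z$ (which preserves the monotonicity of $H$ in $z$, the quantity $\sup_z\modulo{H(y_0,z)}$, and $\Hc_{\partial\W}$); use the continuity of $\Hc_{\partial\W}$ and of $H$ together with the monotonicity of $H$ in $z$ to find $r>0$, a constant $H_0$, and a level $a$ with $(n-1)\Hc_{\partial\W}(y)<nH_0$ on $\partial\W\cap B_r(y_0)$ and $H(x,z)\ge H_0$ for all $x\in\overline{\W}\cap\overline{B_r(y_0)}$ and $z\ge a$; then, on a region of $\W$ adjacent to $y_0$, build a radial barrier $w=\psi(\dist(y_0,\cdot))$ that is a strict supersolution of $\M u=nH(x,u)$ wherever $w\ge a$, using the Hessian comparison theorem in the model space of curvature $K_0$ --- and this is precisely where the dichotomy (a)/(b) enters, since for $K_0\le 0$ the model comparison functions are globally well behaved, whereas for $K_0>0$ one needs $\dist(y_0,\cdot)<\tfrac{\pi}{2\sqrt{K_0}}$ to remain in the range where they keep the correct sign and monotonicity; finally choose $\varphi$ so that the comparison principle --- available because $H$ is non-decreasing in $z$ --- forces a hypothetical solution to violate the strict supersolution inequality near $y_0$. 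The main obstacle in that argument is the barrier construction: verifying the supersolution inequality requires combining the comparison estimate for $\nabla^2\dist(y_0,\cdot)$ with the ODE satisfied by $\psi$, and since the lower bound $H\ge H_0$ holds only for $z\ge a$ the barrier must be lifted high enough that the comparison is invoked only where it --- and hence the solution --- exceeds that threshold. None of this, however, resurfaces at the level of the present Corollary, which is a direct specialization.
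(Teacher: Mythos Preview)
Your deduction of the Corollary is correct and matches the paper's: the paper states it simply as ``a direct consequence of item (a) in Theorem~\ref{M_nao_exist_MxR_Hxz}'', and your verification that a Cartan--Hadamard manifold has empty cut locus and radial curvature bounded above by $K_0=0$ is exactly what that sentence encodes. Your side remark that one should check $K_0=0$ is genuinely allowed in case~(a) is well taken, and indeed the paper's proof of Lemma~\ref{M_nao_exist_MxR_estimativa_Hxz} uses only the Laplacian comparison with $\R^n$ under $K_0\le 0$, which covers $K_0=0$.

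One comment on the extra paragraph where you sketch how you would prove Theorem~\ref{M_nao_exist_MxR_Hxz}: the paper's actual route is somewhat different from what you outline. Rather than a single radial barrier $w=\psi(\dist(y_0,\cdot))$, the paper proceeds in two steps via Lemma~\ref{M_nao_exist_MxR_estimativa_Hxz}: first a barrier $v=\phi\circ d$ built from the distance $d$ to a hypersurface $S$ tangent to $\partial\W$ at $y_0$ (this is where the violation of the Serrin condition enters), and second a barrier $w=\psi\circ\rho$ built from $\rho=\dist(y_0,\cdot)$ on the complement of a small ball (this is where the Laplacian comparison and the dichotomy (a)/(b) enter). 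The glue between the two steps is Proposition~\ref{M_prop_gen_JS}, a comparison principle that tolerates an infinite normal derivative on part of the boundary. The resulting a~priori estimate $u(y_0)<\max\{k,\sup_{\partial\W\setminus B_a(y_0)}u\}+\varepsilon$ is then violated by a suitable smooth $\varphi$. As you say, none of this affects the Corollary itself, which is a pure specialization.
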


{Furthermore, from statement (b) 
we derive the following non-existence result for a class of positively curved manifolds.}
\begin{cor}\label{M_nao_exist_MxR_Hxz_positive_curvature}
Let $M$ be a simply connected and compact manifold whose sectional curvature satisfies $\frac{1}{4} K_0 < K \leq K_0$ for a positive constant $K_0$. Let $\W\subset M$ be a domain with $\diam(\W)<\frac{\pi}{2\sqrt{K_0}}$ and whose boundary is of class $\cl^2$. Let $H\in\cl^0(\overline{\W}\times\R)$ be a function either non-negative or non-positive and non-decreasing in the variable $z$. Suppose there exists $y_0\in\partial\W$ such that
$$(n-1)\Hc_{\partial\W}(y_0) < n \ds\sup_{z\in\R}\modulo{H(y_0,z)}.$$
Then there exists $\varphi\in\cl^{\infty}(\overline{\W})$ such that there is no $u \in \cl^0(\overline{\Omega})\cap \cl^2 (\Omega)$ satisfying equation \eqref{operador_minimo_1} with $u = \varphi$ in $\partial \Omega$.
\end{cor}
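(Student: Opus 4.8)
The plan is to deduce this corollary directly from item (b) of Theorem \ref{M_nao_exist_MxR_Hxz}. The only thing that needs to be checked is that the geometric hypotheses of that theorem are satisfied at the boundary point $y_0\in\partial\W$ where the strict inequality $(n-1)\Hc_{\partial\W}(y_0) < n\sup_{z\in\R}\modulo{H(y_0,z)}$ holds; once this verification is in place, the theorem produces the desired boundary datum $\varphi\in\cl^\infty(\overline\W)$ for which no solution exists. So the work splits into two short geometric claims: first, that $\cut(y_0)\cap\W=\emptyset$, and second, that the radial curvature along the minimizing geodesics from $y_0$ meeting $\W$ is bounded above by the positive constant $K_0$, together with the diameter bound $\dist(y_0,x)<\frac{\pi}{2\sqrt{K_0}}$ for every $x\in\overline\W$.

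For the curvature bound there is nothing to do: the hypothesis $K\le K_0$ on \emph{all} sectional curvatures of $M$ in particular bounds every radial sectional curvature $K(\Pi_x)\le K_0$, so the radial curvature assumption of Theorem \ref{M_nao_exist_MxR_Hxz}(b) holds automatically. The diameter bound is also immediate: since $y_0\in\overline\W$ and $x\in\overline\W$, we have $\dist(y_0,x)\le\diam(\W)<\frac{\pi}{2\sqrt{K_0}}$, which is exactly the requirement in (b). (Strictly speaking one uses $\diam(\overline\W)=\diam(\W)$ for the closure, which holds since the distance function is continuous.)

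The substantive point is $\cut(y_0)\cap\overline\W=\emptyset$, equivalently that every point of $\overline\W$ is joined to $y_0$ by a unique minimizing geodesic and is not conjugate to $y_0$ along it. This is where the combination of the lower curvature bound $K>\frac14 K_0$, the simple connectedness, and the diameter restriction enters. By the Rauch comparison theorem (or the Morse–Schoenberg index comparison), the lower bound $K>\frac14 K_0$ forces the first conjugate point along any geodesic from $y_0$ to be at distance $\ge\frac{\pi}{\sqrt{K_0}}$; since every point of $\overline\W$ lies within distance $\diam(\W)<\frac{\pi}{2\sqrt{K_0}}<\frac{\pi}{\sqrt{K_0}}$ of $y_0$, no point of $\overline\W$ is conjugate to $y_0$. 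To rule out the other way a cut point can occur — two distinct minimizing geodesics — one invokes Klingenberg's injectivity radius estimate for simply connected manifolds: the injectivity radius at $y_0$ is bounded below by $\min\{\frac{\pi}{\sqrt{K_0}},\,\frac12\ell\}$ where $\ell$ is the length of the shortest closed geodesic through $y_0$; the simple connectedness together with $\frac14 K_0<K\le K_0$ (via the Bonnet–Myers diameter bound $\diam(M)\le\frac{\pi}{\sqrt{K_0/4}}=\frac{2\pi}{\sqrt{K_0}}$ and the sphere-type estimate $\ell\ge\frac{2\pi}{\sqrt{K_0}}$ on the shortest closed geodesic) yields $\operatorname{inj}(y_0)\ge\frac{\pi}{\sqrt{K_0}}>\diam(\W)$, so no point of $\overline\W$ is beyond the cut locus. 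Hence $\cut(y_0)\cap\W=\emptyset$.

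The main obstacle is thus purely the injectivity-radius bookkeeping in the last paragraph: one must be careful that the pinching hypothesis $\frac14 K_0<K\le K_0$ is precisely the classical quarter-pinching threshold needed to guarantee, via Klingenberg, that short closed geodesics have length $\ge\frac{2\pi}{\sqrt{K_0}}$, and hence that the injectivity radius is $\ge\frac{\pi}{\sqrt{K_0}}$. Once that standard fact is cited, all three hypotheses of Theorem \ref{M_nao_exist_MxR_Hxz}(b) are verified and the conclusion follows verbatim from the theorem. \qed
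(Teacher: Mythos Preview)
Your approach matches the paper's: after stating the corollary, the paper simply remarks that the hypotheses on $M$ guarantee that the injectivity radius of $M$ is at least $\pi/\sqrt{K_0}$, so $\cut(y_0)\cap\W=\emptyset$ since $\diam(\W)<\pi/(2\sqrt{K_0})$, and then invokes item~(b) of Theorem~\ref{M_nao_exist_MxR_Hxz}. You supply the details behind that remark, which is fine.

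One slip in your bookkeeping: you attribute the conjugate-point bound to the \emph{lower} curvature inequality, but it is the \emph{upper} bound $K\le K_0$ that, via Rauch (or Morse--Schoenberg), forces the first conjugate point along any geodesic from $y_0$ to lie at distance $\ge\pi/\sqrt{K_0}$. The lower bound $K>\tfrac14 K_0$ enters only in Klingenberg's estimate on the length of the shortest closed geodesic (together with simple connectedness), which is what rules out early cut points arising from geodesic doubling. Since both curvature bounds are available, your conclusion $\operatorname{inj}(y_0)\ge\pi/\sqrt{K_0}$ stands; only the attribution of which hypothesis does which job needs swapping.
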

{We remark that the assumptions on $M$ in the above statement guarantee that the injectivity radius of $M$ is greater than or equal to $\frac{\pi}{\sqrt{K_0}}$, thus $\cut(y_0)\cap\W=\emptyset$ since $\diam(\W)<\frac{\pi}{2\sqrt{K_0}}$.}

\section{Further sharp solvability criteria}


{Notice first that a sharp Serrin type result {\cite[p. 416]{Serrin}} for arbitrary constant $H$ was not established in every Cartan-Hadamard manifold (compare Theorems \ref{cond_nece_suf_minimo_Hadamard} and \ref{cond_nece_suf_minimo_curv_positivo_este_este}). However, we get a sharp Serrin criterion when $M$ is the hyperbolic space.} 

{Observe that} if $M=\HH^n$, it follows from the Spruck's existence result \cite[Th. 1.4 p. 787]{spruck} that the Serrin condition is a sufficient condition if $H\geq \frac{n-1}{n}$. In the opposite case $0<H<\frac{n-1}{n}$, Spruck noted that it was possible to establish an existence result if the strict inequality $(n-1)\Hc_{\partial\W} > n {H}$ holds. He used the entire graphs of constant mean curvature $\frac{n-1}{n}$ in $\HH^n\times\R$ as barriers (see \cite{BerardRicardo} for explicit formulas). However, this restriction over the Serrin condition in the last case does not allow to establish a Serrin type solvability criterion for every constant $H$ directly from Spruck's existence result {\cite[Th. 5.4 p. 797]{spruck}} when the ambient is the hyperbolic space.

We have established an existence result \cite[Th. 5 p. 4]{artigoexist:inpress}
for prescribed 
{mean curvature which extends the Spruck's existence result mentioned above for the hyperbolic space, and that also gives the following Serrin type solvability criterion when combined with Collorary \ref{M_nao_exist_MxR_Hxz_Hadamard}:}
\begin{teo}[Serrin type solvability criterion 1]\label{cond_nece_suf_HnxR}
Let $\W\subset \HH^n$ be a bounded domain with $\partial\W$ of class $\cl^{2,\alpha}$ for some $\alpha\in(0,1)$. Let $H\in\cl^{1,\alpha}(\overline{\W}\times\R)$ be a function satisfying $\Dz H\geq 0$ and $0 \leq {H} \leq \frac{n-1}{n}$ in ${\W\times\R}$. Then the Dirichlet problem for equation \eqref{operador_minimo_1} has a unique solution $u\in\cl^{2,\alpha}(\overline{\W})$ for every $\varphi\in\cl^{2,\alpha}(\overline\W)$ if, and only if, the strong Serrin condition \eqref{SerrinConditionGeneral} holds.
\end{teo}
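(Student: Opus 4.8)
The plan is to combine two ingredients already available in the excerpt: the non-existence side (Corollary \ref{M_nao_exist_MxR_Hxz_Hadamard}, specialized to the Hadamard manifold $\HH^n$) and the existence result of \cite{artigoexist:inpress} announced just before the statement. Since the theorem is an ``if and only if'', I would split the proof into the two implications and treat each with one of these ingredients.

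\medskip

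For the necessity direction (``solvability $\Rightarrow$ strong Serrin condition''), I would argue by contraposition: suppose \eqref{SerrinConditionGeneral} fails, so there is a boundary point $y_0\in\partial\W$ with $(n-1)\Hc_{\partial\W}(y_0) < n\sup_{z\in\R}|H(y_0,z)|$. The hypotheses on $H$ in Theorem \ref{cond_nece_suf_HnxR} — namely $0\le H\le \frac{n-1}{n}$ and $\Dz H\ge 0$ — are exactly of the type required by Corollary \ref{M_nao_exist_MxR_Hxz_Hadamard} ($H$ is non-negative, continuous, and non-decreasing in $z$), and $\HH^n$ is a Cartan-Hadamard manifold, so the corollary applies verbatim: there exists $\varphi\in\cl^\infty(\overline\W)$ for which the Dirichlet problem has no solution in $\cl^0(\overline\W)\cap\cl^2(\W)$, contradicting solvability for arbitrary $\cl^{2,\alpha}$ data (one may smooth or perturb this $\varphi$, or simply observe $\cl^\infty(\overline\W)\subset\cl^{2,\alpha}(\overline\W)$). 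I would need to note that the regularity gap — the non-existence corollary only needs $\partial\W\in\cl^2$, while here $\partial\W\in\cl^{2,\alpha}$ — is harmless since $\cl^{2,\alpha}\subset\cl^2$.

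\medskip

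For the sufficiency direction (``strong Serrin condition $\Rightarrow$ solvability''), I would invoke the existence theorem \cite[Th. 5 p. 4]{artigoexist:inpress}, which is stated in the excerpt as extending Spruck's result to cover exactly the range $0\le H\le\frac{n-1}{n}$ under the Serrin condition. One should check that the hypotheses match: $\W$ bounded with $\partial\W\in\cl^{2,\alpha}$, $H\in\cl^{1,\alpha}(\overline\W\times\R)$ with $\Dz H\ge 0$ and $0\le H\le\frac{n-1}{n}$, and the strong Serrin condition \eqref{SerrinConditionGeneral} in force; under these the cited theorem produces the unique $u\in\cl^{2,\alpha}(\overline\W)$ with $u=\varphi$ on $\partial\W$ for every $\varphi\in\cl^{2,\alpha}(\overline\W)$. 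Uniqueness in both directions is standard: the comparison principle for the quasilinear elliptic operator $\Q$ (which is locally uniformly elliptic, and uniformly elliptic up to the boundary once $u\in\cl^1(\overline\W)$, as recorded in Section \ref{secIntroducao}) together with the monotonicity $\Dz H\ge 0$ gives uniqueness of solutions with prescribed boundary values.

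\medskip

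The genuinely substantive content is therefore \emph{not} in this theorem's proof but is imported: the non-existence machinery of Theorem \ref{M_nao_exist_MxR_Hxz} and the a priori estimates and continuity-method argument underlying \cite{artigoexist:inpress}. The only real obstacle at this level is making sure the two cited results are quoted with mutually compatible hypotheses — in particular that the strong Serrin condition \eqref{SerrinConditionGeneral} (with the supremum over $z$) is precisely what the existence result in \cite{artigoexist:inpress} assumes, rather than the weaker pointwise condition $(n-1)\Hc_{\partial\W}\ge nH(\cdot, \cdot)$; since $0\le H\le\frac{n-1}{n}$ the supremum is finite and the matching is clean. I would close by remarking that this is the first Serrin-type solvability criterion valid for \emph{every} admissible variable $H$ (not merely constant or sign-restricted) in a non-Euclidean ambient, as advertised in the introduction.
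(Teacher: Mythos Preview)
Your proposal is correct and follows exactly the paper's approach: the theorem is obtained by combining the existence result \cite[Th.~5 p.~4]{artigoexist:inpress} (sufficiency) with Corollary~\ref{M_nao_exist_MxR_Hxz_Hadamard} applied to the Hadamard manifold $\HH^n$ (necessity). The paper does not even write out a formal proof but simply announces this combination in the sentence preceding the statement, so your more detailed breakdown of the two implications and the hypothesis-checking is entirely in line with what is intended.
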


By combining the existence result of Spruck {\cite[Th. 1.4 p. 787]{spruck}} with Corollary \ref{M_nao_exist_MxR_Hxz_Hadamard}, and putting together Theorem \ref{cond_nece_suf_HnxR}, we deduce that the sharp solvability criterion of Serrin {\cite[p. 416]{Serrin}} for arbitrary constant $H$ also holds in the $\cl^{2,\alpha}$ class if we replace $\R^n$ {by} $\HH^n$:
\begin{teo}[Sharp Serrin type solvability criterion]\label{cond_nece_suf_HnxR_constate}
Let $\W\subset \HH^n$ be a bounded domain whose boundary is of class {$\cl^{2,\alpha}$}. Then for every constant $H$ the Dirichlet problem for equation $\M u=nH$ 
has a unique solution for arbitrary continuous boundary data if, and only if, $(n-1) \Hc_{\partial\W}(y) \geq n \modulo{H}$.
\end{teo}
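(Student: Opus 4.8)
The plan is to derive both implications from results already established in this work. For the ``only if'' direction we would specialize Corollary \ref{M_nao_exist_MxR_Hxz_Hadamard} to the Cartan-Hadamard manifold $M=\HH^n$: a constant $H$ belongs to $\cl^0(\overline\W\times\R)$, is either nonnegative or nonpositive, is (trivially) non-decreasing in $z$, and satisfies $\sup_{z\in\R}\modulo{H(y_0,z)}=\modulo{H}$ for every $y_0$. Hence, if $(n-1)\Hc_{\partial\W}(y_0)<n\modulo{H}$ held at some $y_0\in\partial\W$, that corollary would produce $\varphi\in\cl^\infty(\overline\W)$ for which no $u\in\cl^0(\overline\W)\cap\cl^2(\W)$ solves $\M u=nH$ with $u=\varphi$ on $\partial\W$, contradicting solvability for arbitrary continuous data. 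Thus $(n-1)\Hc_{\partial\W}\ge n\modulo{H}$ is necessary.

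For the ``if'' direction, assume $(n-1)\Hc_{\partial\W}(y)\ge n\modulo{H}$ for all $y\in\partial\W$. We would first reduce to $H\ge 0$: replacing $(u,H,\varphi)$ by $(-u,-H,-\varphi)$ uses $\M(-u)=-\M u$ and leaves both $\partial\W$ and $\modulo{H}$ unchanged, so it converts the problem for $H$ into that for $-H$ with the same boundary inequality. Now split into two cases. If $H\ge\frac{n-1}{n}$, Spruck's existence theorem \cite[Th. 1.4 p. 787]{spruck} already solves $\M u=nH$ in $\W$ with any prescribed $\varphi\in\cl^{2,\alpha}(\overline\W)$ under the non-strict Serrin condition. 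If $0\le H<\frac{n-1}{n}$, we would invoke Theorem \ref{cond_nece_suf_HnxR} instead: the constant $H$ satisfies $\Dz H\equiv 0\ge 0$ and $0\le H\le\frac{n-1}{n}$, and for constant $H$ the strong Serrin condition \eqref{SerrinConditionGeneral} is exactly $(n-1)\Hc_{\partial\W}\ge n\modulo{H}$, so that theorem yields a solution $u\in\cl^{2,\alpha}(\overline\W)$ for every $\varphi\in\cl^{2,\alpha}(\overline\W)$. In both cases the Dirichlet problem is solvable for $\cl^{2,\alpha}$ data, and uniqueness comes from the comparison principle for $\Q$ (here particularly transparent, since $H$ does not depend on $u$, so the difference of two solutions satisfies a linear elliptic equation with no zeroth-order term).

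It then remains to pass from $\cl^{2,\alpha}$ to arbitrary $\varphi\in\cl^0(\partial\W)$, and this is where the constancy of $H$ pays off. Choosing $\varphi_k\in\cl^{2,\alpha}(\overline\W)$ with $\varphi_k\to\varphi$ uniformly on $\partial\W$ and letting $u_k$ be the solutions just obtained, we would observe that, since \eqref{operador_minimo_1} with constant right-hand side is invariant under adding constants, $u_k+c$ is again a solution for every $c$; comparing $u_k$ with $u_j\pm\norm{\varphi_k-\varphi_j}_{\cl^0(\partial\W)}$ would give $\norm{u_k-u_j}_{\cl^0(\overline\W)}\le\norm{\varphi_k-\varphi_j}_{\cl^0(\partial\W)}$. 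Hence $(u_k)$ is Cauchy in $\cl^0(\overline\W)$ and converges uniformly to some $u\in\cl^0(\overline\W)$ with $u=\varphi$ on $\partial\W$. The interior gradient estimate for the mean curvature equation, together with interior Schauder estimates, would bound the $u_k$ in $\cl^{2,\alpha}$ on compact subsets of $\W$ in terms of $\sup_{\overline\W}\modulo{u_k}$, which stays bounded; so $u_k\to u$ in $\cl^2_{\mathrm{loc}}(\W)$, $u$ solves $\M u=nH$ in $\W$, and interior regularity promotes $u$ to $\cl^\infty(\W)$. Uniqueness for the limit problem again follows from the comparison principle.

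The genuinely load-bearing point will be the case distinction: the regime $0\le\modulo{H}<\frac{n-1}{n}$ is precisely the one where Spruck's method yielded only the strict Serrin inequality, and it is Theorem \ref{cond_nece_suf_HnxR} --- itself resting on Corollary \ref{M_nao_exist_MxR_Hxz_Hadamard} together with the authors' sharp existence result in $\HH^n$ --- that closes that gap and makes the statement sharp for \emph{every} constant $H$. By contrast, the approximation step should be routine: the invariance of \eqref{operador_minimo_1} under vertical translations (valid because $H$ is constant) makes the $\cl^0$ contraction estimate immediate, so no dedicated boundary-barrier construction is required there.
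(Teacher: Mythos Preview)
Your proposal is correct and follows essentially the same route as the paper: the necessity direction via Corollary~\ref{M_nao_exist_MxR_Hxz_Hadamard}, and the sufficiency direction by splitting into $H\ge\frac{n-1}{n}$ (Spruck~\cite[Th.~1.4]{spruck}) and $0\le H<\frac{n-1}{n}$ (Theorem~\ref{cond_nece_suf_HnxR}). The paper's derivation is a single sentence combining these three ingredients; you have additionally spelled out the passage from $\cl^{2,\alpha}$ to continuous boundary data via vertical-translation comparison and interior estimates, which the paper leaves implicit but which is indeed needed for the statement as written.
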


{We have also proved the following generalization of the Spruck's existence result {\cite[Th. 1.4 p. 787]{spruck}} for constant mean curvature:
\begin{teo}[{\cite[Th. 4 p. 4]{artigoexist:inpress}}]\label{T_exist_Ricci}
Let $\Omega \subset M$ be a bounded domain with $\partial\W$ of class $\cl^{2,\alpha}$ for some $\alpha\in(0,1)$. 
Let $H\in\cl^{1,\alpha}(\overline{\W}\times\R)$ satisfying $\Dz H \geq 0$ and
\begin{equation}\label{cond_H_Ricci_0}
\Ricc_x\geq n\sup\limits_{z\in\R}\norm{\nabla_x H(x,z)}-\dfrac{n^2}{n-1}\ds\inf_{z\in\R}\left(H(x,z)\right)^2 \ \forall \ x\in\W.
\end{equation}
If 
$$
(n-1)\Hc_{\partial\W}(y)\geq n \sup\limits_{z\in\R}\modulo{H\left(y,z\right)} \ \forall \ y\in\partial\W,
$$
then for every $\varphi\in\cl^{2,\alpha}(\overline{\W})$ there exists a unique solution $u\in\cl^{2,\alpha}(\overline{\W})$ of the Dirichlet problem for equation \eqref{operador_minimo_1}.
\end{teo}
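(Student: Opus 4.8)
\section*{Proof proposal for Theorem \ref{T_exist_Ricci}}

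The natural route is the method of continuity in the form of the Leray--Schauder fixed point theorem, reducing existence to a single global a priori estimate. Concretely, I would embed the problem in the family $\M u = n\sigma H(x,u)$ in $\W$, $u=\sigma\varphi$ on $\partial\W$, $\sigma\in[0,1]$, whose member $\sigma=0$ has the unique solution $u\equiv0$ and whose member $\sigma=1$ is the target problem \eqref{operador_minimo_1}. Each member inherits $\Dz(\sigma H)=\sigma\,\Dz H\geq0$ and the Serrin inequality (since $\sigma\leq1$); care is required to check that the curvature hypothesis \eqref{cond_H_Ricci_0} is respected along the path, a point I return to below. Granting a bound $\norm{u}_{\cl^{1,\alpha}(\overline{\W})}\leq C$ uniform in $\sigma$ for all solutions of the $\sigma$-problem, compactness of the associated solution operator yields a $\cl^{1,\alpha}$ solution at $\sigma=1$; since $H\in\cl^{1,\alpha}$ and the coefficients of $\M$ depend smoothly on $\nabla u$, linear Schauder theory bootstraps this to the asserted $\cl^{2,\alpha}(\overline{\W})$ regularity.

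The a priori estimate splits into four parts. First, a height bound $\sup_{\overline{\W}}\modulo{u}\leq C_0$, obtained from the comparison principle: the definite sign of $H$ together with $\Dz H\geq0$ produces super- and subsolutions built from the boundary datum $\varphi$ and bounds on $H$ over the relevant height range, confining $u$. Second, a boundary gradient bound $\sup_{\partial\W}\norm{\nabla u}\leq C_1$; this is exactly where the Serrin condition \eqref{SerrinConditionGeneral} is spent. Since $\partial\W$ is $\cl^{2,\alpha}$, the distance $d(\cdot)=\dist(\cdot,\partial\W)$ is $\cl^2$ in a collar, and one constructs barriers $w=\varphi+\psi\circ d$ with $\psi$ concave; expanding $\M w$ in terms of the mean curvature of the level sets of $d$ (which on $\partial\W$ equals $\Hc_{\partial\W}$) shows that the inequality $(n-1)\Hc_{\partial\W}(y)\geq n\sup_z\modulo{H(y,z)}$ makes $w$ a one-sided barrier, pinning the normal derivative of $u$.

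The third and decisive estimate is the interior gradient bound $\sup_{\W}\norm{\nabla u}\leq C_2$, and this is where \eqref{cond_H_Ricci_0} is consumed. Working on the graph $\Sigma=\mathrm{graph}(u)\subset M\times\R$ with unit normal $N$ and angle function $\Theta=\escalar{N}{\partial_z}=1/W$, I would apply the maximum principle to an auxiliary function of the form $\eta=\phi(u)\,W$. The Jacobi/Bochner computation for $\Theta$ on $\Sigma$ generates, in the intrinsic Laplacian $\Delta_\Sigma$, the term $(\norm{B}^2+\overline{\Ricc}(N,N))\Theta$, where $B$ is the second fundamental form, together with a term involving the tangential gradient of $H$. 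Because the ambient is a Riemannian product, $\overline{\Ricc}(N,N)$ reduces to $\Ricc$ of $M$ on the horizontal component of $N$; bounding $\norm{B}^2$ from below in terms of $H^2$ via the Cauchy--Schwarz/trace inequality, absorbing the $\nabla H$ term by Young's inequality (here $\Dz H\geq0$ again supplies a favorable sign), and choosing $\phi$ appropriately, the hypothesis \eqref{cond_H_Ricci_0}---with the constant $\tfrac{n^2}{n-1}$ emerging precisely from the optimal split in these inequalities---forces the resulting zeroth-order coefficient to have the correct sign. The maximum principle then rules out an interior maximum of $W$ beyond the level dictated by $C_0$ and the data, yielding $C_2$. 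Combining $C_1$ and $C_2$ gives a global gradient bound; the equation is thereby uniformly elliptic with bounded coefficients, and Ladyzhenskaya--Uraltseva / De Giorgi--Nash--Moser theory delivers the uniform $\cl^{1,\alpha}$ bound. Uniqueness is then immediate: for two solutions agreeing on $\partial\W$, subtracting the equations and using $\Dz H\geq0$ places the difference under a linear elliptic operator with a zeroth-order term of the right sign, so the maximum principle forces them to coincide.

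The principal obstacle is the interior gradient estimate just described: organizing the curvature computation so that the exact inequality \eqref{cond_H_Ricci_0}, and in particular the sharp constant $\tfrac{n^2}{n-1}$, is what makes the differential inequality close, rather than merely some Ricci lower bound. A closely related subtlety---and the reason the continuity family must be chosen with care---is that the naive replacement $H\mapsto\sigma H$ does not in general preserve \eqref{cond_H_Ricci_0}, since the beneficial term $\tfrac{n^2}{n-1}\inf_z H^2$ scales like $\sigma^2$ while the detrimental term $n\sup_z\norm{\nabla_x H}$ scales only like $\sigma$. I would address this either by homotoping in the boundary datum, keeping $H$ (hence \eqref{cond_H_Ricci_0}) fixed, from a configuration where a solution is already available, or by proving the interior estimate uniformly over the family using the fixed lower bound for $\Ricc$ on the compact set $\overline{\W}$ to handle the intermediate parameters while \eqref{cond_H_Ricci_0} secures the endpoint $\sigma=1$.
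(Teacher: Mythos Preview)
The paper does not contain a proof of this theorem: it is quoted verbatim from the companion paper \cite{artigoexist:inpress} (see the citation in the theorem header) and is used here only as input to the solvability criteria. There is therefore no ``paper's own proof'' to compare your proposal against.

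That said, your outline is the standard route for this class of existence results and is almost certainly what \cite{artigoexist:inpress} does: continuity method reduced to uniform $\cl^{1,\alpha}$ a priori bounds, with the height estimate from $\Dz H\ge0$, the boundary gradient estimate consuming the Serrin condition via distance-function barriers, the global (not merely interior) gradient estimate consuming the Ricci hypothesis \eqref{cond_H_Ricci_0} through a Bochner/Jacobi computation for $W$ on the graph, and then De Giorgi--Nash and Schauder to close. Your identification of the scaling mismatch in \eqref{cond_H_Ricci_0} under $H\mapsto\sigma H$ is apt; the usual fix, as you suggest, is to homotope only the boundary data (e.g.\ $u=\sigma\varphi$ on $\partial\W$ with $H$ fixed), which leaves \eqref{cond_H_Ricci_0} intact and still has the trivial solution at $\sigma=0$ when the height estimate is uniform. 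One small correction: the sign hypothesis on $H$ (either $H\ge0$ or $H\le0$) that appears in the non-existence statements of this paper is \emph{not} assumed in Theorem~\ref{T_exist_Ricci}, so your height-estimate sketch should not rely on it; the bound comes instead from $\Dz H\ge0$ together with the boundedness of $H$ over $\overline\W\times[-\sup|\varphi|,\sup|\varphi|]$ and the boundary barriers.
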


Theorem \ref{T_exist_Ricci} in combination with Corollaries \ref{M_nao_exist_MxR_Hxz_Hadamard} and \ref{M_nao_exist_MxR_Hxz_positive_curvature} yields the following generalization in the $\cl^{2,\alpha}$ class of a theorem of Serrin {\cite[Th. p. 484]{Serrin}} in the Euclidean space:
\begin{teo}[Serrin type solvability criterion 2]\label{cond_nece_suf_hadamard}
Let $\W\subset M$ be a bounded domain whose boundary is of class $\cl^{2,\alpha}$ for some $\alpha\in(0,1)$. Suppose that $H\in\cl^{1,\alpha}(\overline{\W}\times\R)$ is either non-negative or non-positive in $\overline{\W}\times\R$, $\Dz H\geq 0$ and
$$\Ricc_x\geq n\ds\sup_{z\in\R}\norm{\nabla_x H(x,z)}-\dfrac{n^2}{n-1}\ds\inf_{z\in\R}\left(H(x,z)\right)^2, \ \forall \ x\in\W.$$
Assume either that
\begin{enumerate}
\item $M$ is a Cartan-Hadamard manifold, or
\item $M$ is a compact manifold whose sectional curvature $K$ satisfies $0<\frac{1}{4} K_0 < K \leq K_0$ and $\diam(\W)<\frac{\pi}{2\sqrt{K_0}}$.
\end{enumerate}
Then the Dirichlet problem for equation \eqref{operador_minimo_1} has a unique solution $u\in\cl^{2,\alpha}(\overline{\W})$ for every $\varphi\in\cl^{2,\alpha}(\overline{\W})$ if, and only if, the strong Serrin condition \eqref{SerrinConditionGeneral} holds.
\end{teo}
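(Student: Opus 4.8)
The plan is to deduce Theorem~\ref{cond_nece_suf_hadamard} by combining three ingredients already assembled in the paper: the existence result Theorem~\ref{T_exist_Ricci}, which provides solvability under the Ricci curvature bound together with the (non-strict) strong Serrin condition; the non-existence Corollary~\ref{M_nao_exist_MxR_Hxz_Hadamard}, valid in Cartan--Hadamard manifolds; and the non-existence Corollary~\ref{M_nao_exist_MxR_Hxz_positive_curvature}, valid in the compact positively pinched case with the diameter restriction. Since both corollaries require only $\cl^2$ regularity of $\partial\W$, while the existence statement requires $\cl^{2,\alpha}$, the hypotheses of the theorem (namely $\partial\W\in\cl^{2,\alpha}$, $H\in\cl^{1,\alpha}$, $\Dz H\geq0$, $H$ of one sign, the Ricci bound, and one of the two curvature regimes) feed cleanly into all three results.

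\medskip

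First I would prove the "if" direction. Assume the strong Serrin condition \eqref{SerrinConditionGeneral} holds. All hypotheses of Theorem~\ref{T_exist_Ricci} are then in force — $\partial\W\in\cl^{2,\alpha}$, $H\in\cl^{1,\alpha}(\overline\W\times\R)$, $\Dz H\geq0$, the Ricci inequality \eqref{cond_H_Ricci_0}, and the boundary inequality $(n-1)\Hc_{\partial\W}(y)\geq n\sup_z|H(y,z)|$ — and it is irrelevant here which of the two curvature regimes (1) or (2) we are in. Hence for every $\varphi\in\cl^{2,\alpha}(\overline\W)$ the Dirichlet problem for \eqref{operador_minimo_1} admits a unique solution $u\in\cl^{2,\alpha}(\overline\W)$. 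Note that the sign hypothesis on $H$ is not needed for this direction; it will only be used for the converse.

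\medskip

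For the "only if" direction I would argue by contraposition: suppose the strong Serrin condition fails, i.e. there is $y_0\in\partial\W$ with $(n-1)\Hc_{\partial\W}(y_0)<n\sup_{z\in\R}|H(y_0,z)|$. In case~(1), $M$ is Cartan--Hadamard, so Corollary~\ref{M_nao_exist_MxR_Hxz_Hadamard} applies directly (its hypotheses — $\partial\W\in\cl^2$, $H\in\cl^0(\overline\W\times\R)$ of one sign and non-decreasing in $z$ — are all implied by ours) and yields a $\varphi\in\cl^\infty(\overline\W)$ for which no solution exists. In case~(2), $M$ is compact with $\frac14K_0<K\leq K_0$ and $\diam(\W)<\frac{\pi}{2\sqrt{K_0}}$; simple connectedness is automatic here since a compact manifold with quarter-pinched positive curvature is simply connected (or one simply adds it, matching the hypothesis of Corollary~\ref{M_nao_exist_MxR_Hxz_positive_curvature}), and then that corollary produces the desired $\varphi$. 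In either case the Dirichlet problem is not solvable for all continuous (indeed all smooth) boundary data, completing the contrapositive. Putting the two directions together gives the stated equivalence.

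\medskip

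I do not expect a genuine obstacle: the theorem is a packaging of results proved earlier in this paper and in \cite{artigoexist:inpress}. The only points requiring a word of care are bookkeeping ones — checking that the regularity classes match (the existence side needs $\cl^{2,\alpha}$, the non-existence side only $\cl^2$, so the intersection $\cl^{2,\alpha}$ in the hypothesis is exactly right), that the sign condition on $H$ is invoked precisely where the non-existence corollaries need it, and that in case~(2) the compactness-plus-pinching hypotheses indeed supply the simple connectedness and the cut-locus disjointness (via injectivity radius $\geq\frac{\pi}{\sqrt{K_0}}$) demanded by Corollary~\ref{M_nao_exist_MxR_Hxz_positive_curvature}, as already remarked after that corollary.
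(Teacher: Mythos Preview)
Your proposal is correct and follows exactly the route the paper takes: the theorem is stated there precisely as the combination of Theorem~\ref{T_exist_Ricci} (for the ``if'' direction) with Corollaries~\ref{M_nao_exist_MxR_Hxz_Hadamard} and~\ref{M_nao_exist_MxR_Hxz_positive_curvature} (for the ``only if'' direction). One small caution: compactness plus strict quarter-pinching does \emph{not} by itself force simple connectedness (think of real projective space), so in case~(2) you should take the alternative you already offer and simply carry that hypothesis over from Corollary~\ref{M_nao_exist_MxR_Hxz_positive_curvature}.
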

}%

\section{Proof of the main non-existence theorem}\label{cap_NaoExis}
\markboth{Y. N. Alvarez and R. Sa Earp}{Proof of the main non-existence theorem}

%

The proof of Theorem \ref{M_nao_exist_MxR_Hxz} is based on two results that will be proved in the sequel. The following fundamental proposition traces its roots back to the work of Finn \cite[Lemma p. 139]{Finn1965} when he established the theorem ensuring the non-existence of solutions for Dirichlet problems for the minimal surface equation in non-convex domain of $\R^2$. His lemma was extended by Jenkins-Serrin {\cite[Prop. III p. 182]{Serrin1968}} for the minimal hypersurface equation in $\R^n$, and subsequently by Serrin \cite[Th. 1 p. 459]{Serrin} for quasilinear elliptic operators (see also \cite[Th. 14.10 p. 347]{GT}). Afterward M. Telichevesky \cite[Lemma. 11 p. 250]{miriam} extended the result for the minimal vertical equation in $M\times\R$. We will use some of the ideas of these works.

\begin{prop}\label{M_prop_gen_JS}
Let $\W\in M$ be a bounded domain. Let $\Gamma'$ be a relative open portion of $\partial\W$ of class $\cl^1$. Let $H\in\cl^{0}(\overline{\W}\times\R)$ be a function non-decreasing in the variable $z$. Let $u\in\cl^2(\W)\cap\cl^1(\W\cup \Gamma')\cap\cl^0(\overline{\W})$ and $v\in\cl^2(\W)\cap\cl^0(\overline{\W})$ satisfying
$$
\left\{ \begin{array}{cl}
\Q u \geq \Q v &\mbox{in } \W,\\
   u \leq v &\mbox{in } \partial\W\setminus\Gamma',\\
	\parcial{v}{N}=-\infty &\mbox{in }\Gamma',
\end{array} \right.
$$
where $N$ is the inner normal to $\Gamma'$. Under these conditions $u\leq v$ in $\Gamma'$. Therefore $u\leq v$ in $\W$.
\end{prop}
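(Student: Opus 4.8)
The plan is to argue by contradiction using a comparison principle adapted to the boundary, exploiting the infinite normal derivative of $v$ on $\Gamma'$ to absorb the obstruction near that portion of the boundary. First I would suppose the conclusion fails, so that $\sup_{\overline\W}(u-v)=:m>0$. Since $u\le v$ on $\partial\W\setminus\Gamma'$, this supremum is attained either at an interior point or at a point of $\Gamma'$. The idea is that the condition $\partial v/\partial N=-\infty$ on $\Gamma'$ forces $u-v$ to be large only in a thin neighborhood of $\Gamma'$ that shrinks as we approach $\Gamma'$, and then a maximum-principle argument on that neighborhood yields a contradiction. Concretely, for $0<\varepsilon<m$ consider the open set $\W_\varepsilon=\{x\in\W : u(x)-v(x)>m-\varepsilon\}$; it is nonempty, and its closure meets $\partial\W$ only inside $\Gamma'$ (because on $\partial\W\setminus\Gamma'$ we have $u-v\le 0<m-\varepsilon$).

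The core step is to compare $u$ with the function $v$ shifted up by the constant $c:=m-\varepsilon$, i.e. $v_c:=v+c$, on the domain $\W_\varepsilon$. On $\W_\varepsilon$ we have $u>v_c$ by definition of $\W_\varepsilon$, while on $\partial\W_\varepsilon\cap\W$ we have $u=v_c$. Because $H$ is non-decreasing in $z$, we have $\M v_c - nH(x,v_c)=\M v - nH(x,v+c)\le \M v - nH(x,v)=\Q v\le \Q u=\M u-nH(x,u)$, so $\Q u\ge \Q v_c$ in $\W_\varepsilon$, i.e. $u$ is a subsolution relative to $v_c$. Now I would invoke the standard comparison principle for the operator $\M$ (which is locally uniformly elliptic, being of the form $\frac1W g$ on the graph) in divergence/quasilinear form: since $\M u\ge\M v_c + n(H(x,u)-H(x,v_c))\ge \M v_c$ wherever $u\ge v_c$, and $u=v_c$ on the interior part of $\partial\W_\varepsilon$, the maximum of $u-v_c$ over $\overline{\W_\varepsilon}$ is attained on $\partial\W_\varepsilon$. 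The only part of $\partial\W_\varepsilon$ not contained in $\W$ lies in $\Gamma'$; on the interior part $u-v_c=0$. Hence the strictly positive supremum $\varepsilon$ of $u-v_c$ on $\overline{\W_\varepsilon}$ can only be approached along $\Gamma'$.

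It is at the boundary portion $\Gamma'$ that the hypothesis $\partial v/\partial N=-\infty$ is used, and this is the main obstacle: one must make rigorous that $v$ (hence $v_c$) decreases so steeply in the inner normal direction near $\Gamma'$ that $u$, which is $\cl^1$ up to $\Gamma'$ and thus has bounded normal derivative, cannot stay above $v_c$ in a full one-sided neighborhood. The clean way to do this, following Serrin and Telichevesky, is to fix a point $y\in\Gamma'$ near which the supremum is approached, introduce a boundary coordinate with $t=\operatorname{dist}(\cdot,\Gamma')$, and note that for any constant $L>0$ there is $\delta>0$ with $v(x)\le v(\pi(x))-L\,t(x)$ for $t(x)<\delta$, where $\pi$ is nearest-point projection to $\Gamma'$; choosing $L$ larger than the (finite) supremum of $|\nabla u|$ near $y$ plus the Lipschitz constant of $u|_{\Gamma'}$ against $v|_{\Gamma'}$, one gets $u(x)-v_c(x)\le u(x)-v(\pi(x))+L t(x)-c < 0$ once $t(x)$ is small, contradicting that the supremum $\varepsilon>0$ is approached there. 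This forces $m\le 0$, i.e. $u\le v$ in $\W$, and by continuity $u\le v$ on $\Gamma'$ as well.

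I would structure the writeup so that the quasilinear comparison principle is quoted from \cite[Th.~10.1]{GT} or the analogous statement in \cite{miriam} applied on $\W_\varepsilon$, and the only delicate verification is the boundary estimate near $\Gamma'$; everything else (monotonicity of $H$ giving the subsolution inequality, the structure of $\W_\varepsilon$, the limiting argument in $\varepsilon$) is routine. The one technical point to be careful about is regularity of $\partial\W_\varepsilon$: since $u-v$ is only $\cl^2$ in the interior and $\cl^0$ up to $\overline\W$, the set $\W_\varepsilon$ need not have smooth boundary, but the comparison principle in the form that the maximum is attained on the topological boundary does not require smoothness, so this causes no real trouble.
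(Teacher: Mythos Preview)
Your overall architecture---assume $m>0$, shift $v$ by a positive constant, invoke the comparison principle, and then exploit $\partial v/\partial N=-\infty$ at the boundary---is the right one, and it is the same scheme the paper follows. However, two things go wrong.

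\medskip
\textbf{A sign error in the boundary step.} From $v(x)\le v(\pi(x))-L\,t(x)$ you get $-v_c(x)\ge -v(\pi(x))+L\,t(x)-c$, hence
\[
u(x)-v_c(x)\ \ge\ u(x)-v(\pi(x))+L\,t(x)-c,
\]
not $\le$ as you wrote. So your conclusion ``$u(x)-v_c(x)<0$ once $t(x)$ is small'' does not follow; in fact the steep decay of $v$ in the inner normal direction makes $u-v$ \emph{larger} just inside $\Gamma'$, not smaller. The contradiction you want comes out the other way: combined with the $\cl^1$ bound $u(x)\ge u(\pi(x))-C\,t(x)$, the display above yields $u(x)-v(x)\ge (u-v)(\pi(x))+(L-C)\,t(x)$, and at a point $y\in\Gamma'$ where $(u-v)(y)=m$ this exceeds $m$ for $L>C$ and $t>0$, contradicting $\sup_{\overline\W}(u-v)=m$.

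\medskip
\textbf{Unnecessary machinery.} The level sets $\W_\varepsilon$, the nearest-point projection, and the Lipschitz bookkeeping are all avoidable. The paper sets $m=\max_{\Gamma'}(u-v)$, so that $u\le v+m$ on the \emph{whole} boundary $\partial\W$, and applies the comparison principle \cite[Th.~10.1]{GT} once on $\W$ (monotonicity of $H$ gives $\Q(v+m)\le\Q v\le\Q u$), obtaining $u\le v+m$ throughout $\W$. Then, at $y_0\in\Gamma'$ with $u(y_0)=v(y_0)+m$, along the inner normal geodesic $\gamma(t)=\exp_{y_0}(tN_{y_0})$ one has
\[
u(\gamma(t))-u(y_0)\ \le\ v(\gamma(t))-v(y_0),
\]
and dividing by $t$ and letting $t\to 0^+$ gives $\partial u/\partial N(y_0)\le-\infty$, contradicting $u\in\cl^1(\W\cup\Gamma')$. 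This two-line difference-quotient argument replaces your entire boundary analysis and avoids the sign pitfall.
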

\begin{proof}
By contradiction, suppose that $m=\ds\max_{\Gamma'} (u-v) > 0.$
Hence, $u \leq v + m $ in $\Gamma'$. Then $u\leq v+m$ in $\partial\W$ since $u\leq v$ in $\partial\W\setminus\Gamma'$ by hypotheses.
In view of the function $H$ is non-decreasing in $z$ and $m>0$, we have
$$\Q(v+m)=\M (v+m)-nH(x,v+m)\leq \M v - nH(x,v)=\Q v \leq \Q u.$$
As a consequence of the maximum principle (see \cite[Th. 10.1 p. 263]{GT}) $u\leq v+m$ in $\W$.
Let $y_0\in \Gamma'$ be such that $m = u(y_0)-v(y_0)$. Let $\gamma_{y_0}=\exp_{y_0}(tN_{y_0})$, for $t>0$ near $0$. Then
\begin{align*}
u(\gamma_{y_0}(t))-u(y_0)
\leq	 \left(v\left(\gamma_{y_0}(t)\right)+m\right)-\left(v(y_0)+m\right)= v(\gamma_{y_0}(t))-v(y_0).
\end{align*}
Dividing the expression by $t$ and passing to the limit as $t$ goes to zero it follows that $ \parcial{u}{N}\leq -\infty$. This is a contradiction since $u\in\cl^1(\Gamma')$, hence, $u\leq v$ in $\Gamma'$. 
\end{proof}

The next lemma plays a fundamental role in this paper. In this lemma it is established a height a priori estimate for solutions of equation $\M u=nH(x,u)$ in $\W$ in those points of $\partial\W$ on which the strong Serrin condition \eqref{SerrinConditionGeneral} fails.
\begin{lema}\label{M_nao_exist_MxR_estimativa_Hxz}
Let $\W\subset M$ be a bounded domain whose boundary is of class $\cl^2$. Let $H\in\cl^0(\overline{\W}\times\R)$ be a non-negative function and non-decreasing in the variable $z$, and $u\in\cl^2(\W)\cap\cl^0(\overline{\W})$ satisfying $\M u = nH(x,u)$.
Let us assume that there exists $y_0\in\partial\W$ such that
\begin{equation}\label{cond_Serrin_negac_M_pos}
(n-1)\Hc_{\partial\W}(y_0)<nH(y_0,k)
\end{equation}
for some $k\in\R$. Suppose also that $\cut(y_0)\cap\W=\emptyset$. Furthermore, assume that the radial curvature over the radial geodesics issuing from $y_0$ and intersecting $\W$ is bounded above by $K_0$, where
\begin{enumerate}[label=(\alph*)]
\item $K_0\leq 0$, or
\item $K_0>0$ and $\dist(y_0,x)<\dfrac{\pi}{2\sqrt{K_0}}$ for all $x\in\overline{\W}$.
\end{enumerate}
Then for each $\varepsilon>0$ there exists {a ball $B_a(y_0)$ centered at $y_0$ e radius $a>0$} depending only on $\varepsilon$, $\Hc_{\partial\W}(y_0)$, the geometry of $\W$ and the modulus of continuity of $H(x,k)$ in $y_0$,  such that
\begin{equation}\label{est_nao_exist_Hxz}
u(y_0) < \ds\max\left\{k,\ds\sup_{\partial\W\setminus B_a(y_0)}~u\right\}+\varepsilon. 
\end{equation}
\end{lema}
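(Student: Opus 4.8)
The plan is to construct a supersolution $v$ of $\Q v=0$ on a small half-ball-like region $\W_a=\W\cap B_a(y_0)$ that blows down to $-\infty$ along the piece of $\partial\W$ near $y_0$, and then apply Proposition~\ref{M_prop_gen_JS} on $\W_a$ to bound $u(y_0)$ from above. Since $\cut(y_0)\cap\W=\emptyset$, the distance function $\rho(x)=\dist(y_0,x)$ is smooth on $\overline{\W}$, and since $\partial\W$ is $\cl^2$ with $(n-1)\Hc_{\partial\W}(y_0)<nH(y_0,k)$, by continuity of $H(\cdot,k)$ and of the geometry there is a small $a>0$ so that on $\partial\W\cap B_a(y_0)$ we still have $(n-1)\Hc_{\partial\W}>nH(x,k)-\delta$ for a controlled $\delta$, and moreover $\partial\W\cap B_a(y_0)$ is a graph over its tangent hyperplane at $y_0$ with small $\cl^2$ norm. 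I would then look for $v$ of the form $v(x)=k+\psi(d(x))+C$, where $d(x)=\dist(x,\partial\W)$ near $\Gamma'=\partial\W\cap B_a(y_0)$ (or a smooth modification that agrees with the distance to $\partial\W$ near $\Gamma'$ and is cut off away from it), $\psi$ is a concave increasing function with $\psi(0)=0$, $\psi'(0^+)=+\infty$ (e.g. $\psi(t)=A\sqrt{t}$ or the Finn–Serrin-type choice $\psi(t)=\tfrac1b\log\cosh(bt)$ truncated, giving $\partial v/\partial N=-\infty$ on $\Gamma'$), and $C$ is a constant large enough that $v\geq u$ on the remaining boundary $\partial\W_a\setminus\Gamma'$, i.e. $C\geq \sup_{\partial\W\setminus B_a(y_0)}u+\sup_{\partial B_a(y_0)\cap\W}(\cdots)-k$, and also $v\geq k$ so that monotonicity of $H$ in $z$ gives $H(x,v)\geq H(x,k)$.

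The analytic core is the computation of $\M v$ and the verification that $\Q v\leq 0=\Q u$ on $\W_a$. Writing $v=k+\psi(d)$, one has $\nabla v=\psi'\nabla d$, $W=\sqrt{1+\psi'^2}$, and
$$\M v=\frac{\psi''}{(1+\psi'^2)^{3/2}}+\frac{\psi'}{(1+\psi'^2)^{1/2}}\,\Delta d,$$
since $|\nabla d|=1$ and $\Hess d(\nabla d,\cdot)=0$. The key geometric input is a lower bound on $\Delta d$ near $\Gamma'$: by the Laplacian comparison theorem applied with the radial curvature bound $K_0$ (this is exactly why the cut locus hypothesis and the bound $\dist(y_0,x)<\pi/(2\sqrt{K_0})$ in case (b) are imposed — they guarantee the comparison region is free of focal/cut points), one controls $\Delta d(x)$ from below in terms of $\Hc_{\partial\W}(y_0)$ up to an error that is $O(a)$ plus the modulus of continuity of $H(\cdot,k)$ at $y_0$; concretely, $\Delta d\geq -(n-1)\Hc_{\partial\W}(y_0)-\eta(a)$ on $\W_a$ with $\eta(a)\to0$. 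Choosing $\psi$ with $\psi''<0$ and $\psi'>0$ large, the first term is negative and the second term is $\geq \tfrac{\psi'}{\sqrt{1+\psi'^2}}\big(-(n-1)\Hc_{\partial\W}(y_0)-\eta(a)\big)$, which is close to $-(n-1)\Hc_{\partial\W}(y_0)$; comparing with $nH(x,v)\geq nH(x,k)\geq nH(y_0,k)-\mathrm{osc}$, the hypothesis $(n-1)\Hc_{\partial\W}(y_0)<nH(y_0,k)$ gives a strict gap, so for $a$ small and $\psi'$ large enough, $\M v-nH(x,v)\leq 0$, i.e. $\Q v\leq 0\leq\Q u$.

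With $v$ so constructed on $\W_a$, Proposition~\ref{M_prop_gen_JS} (with $\Gamma'=\partial\W\cap B_a(y_0)$, noting $\partial v/\partial N=\psi'(0^+)|\nabla d\cdot N|=-\infty$ along $\Gamma'$ — I must check the sign so that $v$ decreases into the domain, which is why $\psi$ is increasing in $d=$ distance-to-boundary — wait, rather $v\to-\infty$ toward $\Gamma'$, so I take $\psi$ decreasing, $\psi(0^+)=-\infty$; let me instead set $v=k+C-\psi(d)$ with $\psi$ increasing, concave near $0$ with $\psi'(0^+)=+\infty$, so $\partial v/\partial N=-\psi'(0^+)=-\infty$) yields $u\leq v$ on $\overline{\W_a}$, hence $u(y_0)\leq v(y_0)=k+C$. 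Unwinding the choice of $C$ — which by construction equals $\max\{k,\sup_{\partial\W\setminus B_a(y_0)}u\}-k$ plus an arbitrarily small amount absorbed into $\varepsilon$ (the $O(a)$ term from the boundary values of $v$ on $\partial B_a(y_0)\cap\W$ and from $\psi$ evaluated there can be made $<\varepsilon$ by shrinking $a$ further, and concavity of $\psi$ keeps $\psi$ bounded on the relevant range) — gives
$$u(y_0)<\max\Big\{k,\sup_{\partial\W\setminus B_a(y_0)}u\Big\}+\varepsilon,$$
which is \eqref{est_nao_exist_Hxz}. The main obstacle I anticipate is making the error analysis uniform and honest: one must simultaneously (i) choose $a$ small so the Laplacian comparison error $\eta(a)$ and the oscillation of $H(\cdot,k)$ are small, (ii) choose $\psi$ (really its slope parameter) large so $\tfrac{\psi'}{\sqrt{1+\psi'^2}}$ is close to $1$ and $\psi$'s total variation over $[0,a]$ is still $<\varepsilon$, and (iii) ensure these choices are compatible — i.e. that the parameter of $\psi$ can be sent to its limit \emph{after} $a$ is fixed, or that a diagonal choice works — all while keeping the dependence of $a$ only on the quantities listed in the statement. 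Handling the distance function near $\Gamma'$ rigorously (it is $\cl^2$ in a one-sided neighborhood of the $\cl^2$ hypersurface $\Gamma'$, but one should restrict to a thin collar and cut off, checking the cut-off does not spoil the supersolution inequality since it is strict) is the other technical point.
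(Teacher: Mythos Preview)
Your proposal has a genuine structural gap. You attempt a one-step argument on $\W_a=\W\cap B_a(y_0)$, but to apply Proposition~\ref{M_prop_gen_JS} there you need $v\geq u$ on $\partial\W_a\setminus\Gamma'=\partial B_a(y_0)\cap\W$. This set lies in the \emph{interior} of $\W$, where there is no a~priori bound on $u$ in terms of its boundary values: your constant $C$ would have to absorb $\sup_{\partial B_a(y_0)\cap\W}u$, not $\sup_{\partial\W\setminus B_a(y_0)}u$ as you write, and the former is precisely what must be estimated. The paper handles this with a two-step argument. Step~1 uses a barrier built from the distance $d$ to an auxiliary hypersurface $S$ tangent to $\partial\W$ at $y_0$ (on the shifted domain $\W_\epsilon$, so that $\Gamma'=S_\epsilon\cap B_a(y_0)$ is interior and $u$ is $\cl^1$ there), yielding $u(y_0)\leq\max\{k,\sup_{\partial B_a(y_0)\cap\W}u\}+\sqrt{2a/\nu}$. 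Step~2 then bounds $\sup_{\partial B_a(y_0)\cap\W}u$ by $\sup_{\partial\W\setminus B_a(y_0)}u+\psi(a)$ via a \emph{second} barrier on $\W\setminus B_a(y_0)$ built from the distance $\rho(x)=\dist(y_0,x)$ to the point $y_0$.

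You have also misidentified where the radial curvature hypothesis enters. The Laplacian comparison theorem is \emph{not} what controls $\Delta d$ for $d=\dist(\cdot,S)$; near $y_0$ this Laplacian is governed by $\Hc_{\partial\W}(y_0)$ simply by continuity (the paper just uses $|\Delta d(x)-\Delta d(y_0)|<\nu$ on a small ball). The radial curvature bound and the condition $\dist(y_0,x)<\pi/(2\sqrt{K_0})$ are instead used in Step~2 to obtain $\Delta\rho\geq c/\rho$ on $\W\setminus B_a(y_0)$ via comparison with a space form, which --- together with $H\geq 0$ only --- makes the Step~2 barrier a supersolution away from $y_0$. Finally, taking $\Gamma'=\partial\W\cap B_a(y_0)$ as you do would require $u\in\cl^1(\W\cup\Gamma')$, which is not assumed; in the paper's proof, $\Gamma'$ is in both steps a hypersurface interior to $\W$, so this regularity issue never arises.
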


\begin{proof}
{The proof is done in two steps.} First, we will find an estimate for $u(y_0)$
depending on $k$ and $\sup\limits_{\partial B_a(y_0)\cap \W}u$ for some $a$ that does not depend on $u$. Secondly, we will get an upper bound for $\ds \sup_{\partial B_a(y_0)\cap \W}u$ in terms of $\sup\limits_{\partial\W\setminus B_a(y_0)} u $. 

\bigskip
\noindent\textbf{Step 1.}

First of all note that, from \eqref{cond_Serrin_negac_M_pos}, there exists $\nu>0$ such that
\begin{equation}\label{M_condcomigual}
(n-1)\Hc_{\partial\W}(y_0) < n H(y_0,k)-4\nu.
\end{equation}
Let $R_1>0$ be such that $\partial B_{R_1}(y_0)\cap\W$ is connected and
\begin{equation}\label{M_cond_H}
\modulo{H(x,k)-H(y_0,k)}<\dfrac{\nu}{n}, \ \forall \ x\in B_{R_1}(y_0)\cap\W.
\end{equation}


Note also that we can construct an embedded and oriented hypersurface $S$, tangent to $\partial\W$ at $y_0$ and whose mean curvature with respect to the normal $N$ pointing inwards $\W$ at $y_0$ satisfies
\begin{equation}\label{M_curv_S_Gamma_ponto}
\Hc_{S}(y_0)<\Hc_{\partial\W}(y_0)+\dfrac{\nu}{(n-1)}.
\end{equation}

It is well known that for some $\tau>0$ the map
$$\funciones{\Phi_t}{S}{\W}{y}{\exp^{\bot}(y,tN_y)}$$
is a diffeomorphism for each $0\leq t < \tau$, and so $S_t:=\Phi_t(S)$ is parallel to $S$.
{Moreover, the distance function $d(x)=\dist(x,S)$ is of class $\cl^2$ over the set
$$\Sigma_{\tau}=\{\Phi_t(y), \ y\in S , \ 0 \leq t <\tau\}\subset\W.$$ 
}%
Let $0<R_2<\min\{\tau,R_1\}$ be such that
\begin{equation}\label{est_Laplaciano_d}
\modulo{\Delta d(x)-\Delta d(y_0)}<\nu \ \ \forall \ x\in B_{R_2}(y_0)\cap\Sigma_\tau.
\end{equation}%

We now fix $a<R_2$. For $0<\epsilon<a$ we set
$$\W_{\epsilon}=\{x \in B_a(y_0)\cap\Sigma_\tau; d(x)>\epsilon\}.$$

{Let $\phi\in\cl^2(\epsilon, a)$ be a non-negative convex function, decreasing in $(0,a)$ and whose graph gets very steep as $t$ approaches $\epsilon$ from the right. That is, $\phi$ satisfies} 
{\small%
\begin{multicols}{4}
\begin{enumerate}
\item[P1.] $\phi(a)=0$,
\item[P2.] $\phi'\leq0$,
\item[P3.] $\phi''\geq0$,
\item[P4.] $\phi'(\epsilon)=-\infty$.
\end{enumerate}
\end{multicols}}%
\noindent We also require that $\phi'^3\nu + \phi''$=0 in $(\epsilon,a)$. 

\bigskip

Let $v = \ds\max\left\{k,\ds\sup_{\partial B_a(y_0)\cap\W} u \right\}  + \phi\circ d$. So, $v\geq u$ in $\partial \W_{\epsilon}\setminus S_{\epsilon}$.
In addition, if $N_{\epsilon}$ is the normal to $S_{\epsilon}$ inwards $\W_{\epsilon}$ and $x\in S_{\epsilon}\cap B_a(y_0)$, then
\begin{align*}
\parcial{v}{N_{\epsilon}}(x)&=\escalar{\nabla v(x)}{N_{\epsilon}(x)}=\escalar{\phi'(d(x))\nabla d(x)}{\nabla d(x)}=\phi'(\epsilon)=-\infty.
\end{align*}

On the other hand, for $x\in \W_{\epsilon}$, a straightforward computation yields
$$\Q v =  \frac{\phi'}{(1+\phi'^2)^{1/2}} \Delta d + \dfrac{\phi''}{(1+\phi'^2)^{3/2}}-n H(x,v).$$
Since $v \geq k$ and $H$ is non-decreasing in $z$ it follows that $H(x,v)\geq H(x,k)$. Hence,
$$\Q v \leq  \frac{\phi'}{(1+\phi'^2)^{1/2}} \Delta d + \dfrac{\phi''}{(1+\phi'^2)^{3/2}}-n H(x,k).$$
By means of the properties of $\phi$ we have
$$\dfrac{\phi'}{(1+\phi'^2)^{1/2}} >-1,$$
and by the assumption on the sign of $H$ we obtain
$$-nH(x,k) < nH(x,k)\dfrac{\phi'}{(1+\phi'^2)^{1/2}} .$$
Therefore,
\begin{equation}\label{M_exp_Qv_3}
\Q v < \frac{\phi'}{(1+\phi'^2)^{1/2}} \left(\Delta d(x) + n H(x,k)  \right) + \frac{\phi''}{(1+\phi'^2)^{3/2}}.
\end{equation}
Furthermore,
\begin{align*}
\Delta d(x) + n H(x,k) = & \Delta d(x) - \Delta d(y_0) + \Delta d(y_0) + n H(x,k)\\
												> & -\nu -(n-1)\Hc_S(y_0) + n H(x,k)\tag{a}\\
										    > & -2\nu -(n-1)\Hc_{\partial\W}(y_0) + n H(x,k)\tag{b}\\
										    > &  2\nu - n H(y_0,k) + n H(x,k) \tag{c}\\
										    > & \nu \tag{d},
\end{align*}
where (a) follows directly from \eqref{est_Laplaciano_d}, (b) from \eqref{M_curv_S_Gamma_ponto}, (c) from \eqref{M_condcomigual} and (d) from \eqref{M_cond_H}. Using this estimate on \eqref{M_exp_Qv_3} we have
\begin{align*}
\Q v <& \dfrac{\phi'}{(1+\phi'^2)^{1/2}}\nu+ \dfrac{\phi''}{(1+\phi'^2)^{3/2}}\\
 =&\dfrac{1}{(1+\phi'^2)^{3/2}}\left( \phi'(1+\phi'^2) \nu + \phi''\right) \\
<& \dfrac{1}{(1+\phi'^2)^{3/2}}\left( \phi'^3 \nu + \phi''\right).
\end{align*}
Then, $\Q v <0$ in $\W_{\epsilon}$ in view of the requirements on $\phi$. 

Choosing $\phi$ explicitly by\footnote{See also \cite[\S 14.4]{GT} and \cite[Th. 4.1 p. 40]{elias}.}
\begin{equation}\label{M_exp_phi}
\phi(t)=\sqrt{\dfrac{2}{\nu}}\left((a-\epsilon)^{1/2}-(t-\epsilon)^{1/2}\right),
\end{equation}
we observe that $\phi$ satisfies P1--P4 and that $\phi'^3 \nu + \phi''=0$ in $(\epsilon,a)$.
From Proposition \ref{M_prop_gen_JS} we deduce then
$$ u \leq v= \ds\max\left\{k,\ds\sup_{\partial B_a(y_0)\cap\W} u \right\} + \phi(\epsilon) \ \ \mbox{in}\ \ S_{\epsilon}\cap B_a(y_0). $$
In particular,
$$ u(\exp_{y_0}(\epsilon N_{y_0})) \leq \ds\max\left\{k,\ds\sup_{\partial B_a(y_0)\cap\W} u \right\} + \sqrt{\dfrac{2}{\nu}}\left((a-\epsilon)^{1/2}\right).$$
Since this estimate holds for each $0<\epsilon<a$, we can pass to the limit as $\epsilon$ goes to zero to obtain
\begin{equation}\label{est_u0_1}
u(y_0) \leq  \ds\max\left\{k,\ds\sup_{\partial B_a(y_0)\cap\W} u \right\} + \sqrt{\dfrac{2a}{\nu}}.
\end{equation}


\bigskip

\noindent\textbf{Step 2.}

{Let $\delta=\diam(\W)$. Analogously to step 1, we require a function $\psi\in\cl^2(a,\delta)$, non-negative and convex, decreasing in $(a,\delta)$ and whose graph is very steep near $a$. That is, 
{\small%
\begin{multicols}{4}
\begin{enumerate}
\item[P5.] $\psi(\delta)=0$,
\item[P6.] $\psi'\leq0$,
\item[P7.] $\psi''\geq0$,
\item[P8.] $\psi'(a)=-\infty$,
\end{enumerate}
\end{multicols}}%
\noindent In addition, we need that $\frac{c\psi'^3}{t}+\psi''\leq 0$ in $(a,\delta)$ for a positive constant $c$ to be chosen later on.}

Let $w=\ds\sup_{\partial\W\setminus B_a(y_0)} u + \psi\circ\rho$ be defined in $\W'=\W\setminus B_a(y_0)$, where $\rho(x)=\dist(x,y_0)$. We remind that $\rho\in\cl^2(M\setminus(\cut(y_0)\cup\{y_0\}))$, so $w\in\cl^2(\W\setminus B_a(y_0))$. 
The idea is to use Proposition \ref{M_prop_gen_JS} again.
We note that $w\geq u$ in $\partial\W\setminus B_a(y_0)$.
Also, if $N_a$ is the normal field to $\partial B_a(y_0)\cap\W$ inwards $\W'$, we have for each $x\in\partial B_a(y_0)\cap\W$ that
\begin{align*}
\parcial{w}{N_a}(x)&=\escalar{\nabla w(x)}{N_a(x)}=\escalar{\psi'(\rho(x))\nabla \rho(x)}{\nabla \rho(x)}=\psi'(a)=-\infty.
\end{align*}
For $w$ we have
$$ \Q w =  \dfrac{\psi'}{(1+\psi'^2)^{1/2}} \Delta \rho + \dfrac{\psi''}{(1+\psi'^2)^{3/2}}-n H(x,w) .$$
Since $H\geq 0$, it follows
$$ \Q w \leq  \dfrac{\psi'}{(1+\psi'^2)^{1/2}} \Delta \rho + \dfrac{\psi''}{(1+\psi'^2)^{3/2}}.$$
In any of the hypothesis (a) or (b), the radial geodesics issuing from $y_0$ and intercepting $\W$ do not contain conjugate points to $y_0$ (see {\cite[Th. 6.5.6 p. 151]{Klingenberg}, \cite[Th. p. 107]{Chavel}}). Then the Laplacian comparison theorem {\cite[Th. A p. 19]{greenewu}} can be used to estimate $\Delta \rho$ {in $\W'$}.

Under the hypothesis (a) we compare $M$ with $\R^n$ to obtain
$$\Delta \rho(x) \geq \dfrac{n-1}{\rho(x)}.$$

Under the hypothesis (b) we compare $M$ with the sphere $S^n_{K_0}$ of sectional curvature $K_0>0$. In this case
$$
\Delta \rho(x) \geq (n-1)\sqrt{K_0}\cot\left(\sqrt{K_0}\rho(x)\right).
$$
From the second assumption on (b) there also exists $0<\kappa<\frac{\pi}{2\sqrt{K_0}}$ such that $\dist(x,y_0)\leq \frac{\pi}{2\sqrt{K_0}}-\kappa$, for each $x\in\overline{\W}$. Thus, for each $x\in\W\setminus B_a(y_0)$, there exists a unique normal minimizing geodesic $\beta$ such that $\beta(0)=y_0$ and $\beta (t_0)=x$, where $t_0\leq\dfrac{\pi}{2\sqrt{K_0}}-\kappa$. Let us define the function $\xi(t)=\sqrt{K_0}t\cot\left(\sqrt{K_0}t\right)$ for $t>0$. We note that $\xi$ is decreasing and $\xi\left(\frac{\pi}{2\sqrt{K_0}}\right)=0$. Then,
$$\xi(t)\geq \xi\left(\dfrac{\pi}{2\sqrt{K_0}}-\kappa\right)>0, \ \forall t\in\left.\left( 0, \dfrac{\pi}{2\sqrt{K_0}}-\kappa\right.\right].$$
Consequently,
\[ \rho(x)\Delta \rho(x) \geq (n-1)C,\]
where $$C=\sqrt{K_0}\left(\frac{\pi}{2\sqrt{K_0}}-\kappa\right)\cot\left(\sqrt{K_0}\left(\frac{\pi}{2\sqrt{K_0}}-\kappa\right)\right)>0.$$

Thus $\Delta \rho(x)\geq \frac{c}{\rho}$, where $c=n-1$ in the case (a) and $c=(n-1)C$ in the case (b).
Therefore, 
\begin{align*}
 \Q w \leq & \dfrac{\psi'}{(1+\psi'^2)^{1/2}} \cdot \dfrac{c}{\rho} + \dfrac{\psi''}{(1+\psi'^2)^{3/2}}\\
=&\dfrac{1}{(1+\psi'^2)^{3/2}}\left(\dfrac{c}{\rho} \psi'(1+\psi'^2)  + \psi''\right)\\
<&\dfrac{1}{(1+\psi'^2)^{3/2}}\left(\dfrac{c}{\rho} \psi'^3  + \psi''\right).
\end{align*}
So, $\Q w <0$ in $\W'$ due to the construction of $\psi$.

Let us define $\psi$ as \footnote{See also \cite[\S 14.4]{GT}}
\begin{equation}\label{M_expres_psi}
\psi(t)=\left(\dfrac{2}{c}\right)^{1/2}\ds\int_t^{\delta} \left(\log \frac{r}{a}\right)^{-1/2}dr.
\end{equation}
Such a function satisfies P5--P8, and also $\dfrac{c}{t} \psi'(t)^3  + \psi''(t)<0$ in $(a,\delta)$.
From Proposition \ref{M_prop_gen_JS} we can conclude that $u \leq w$ in $\partial B_a(y_0)\cap\W$, and then
\begin{equation}\label{M_desigualdade_bordo}
\ds\sup_{\partial B_a(y_0)\cap \W} u \leq \ds\sup_{\partial\W\setminus B_a(y_0)} u + \psi(a).
\end{equation}

{We remark that in step 2 no geometric property on $a$ is required other than the connectedness of $\partial B_a(y_0)\cap \W$.}

Finally, we use \eqref{M_desigualdade_bordo} in \eqref{est_u0_1} from step 1, so
$$u(y_0) \leq \ds\max\left\{k,\ds\sup_{\partial\W\setminus B_a(y_0)} u\right\} + \psi(a) +\sqrt{\dfrac{2a}{\nu}}.$$
It is easy to see that $\ds\lim_{a\rightarrow 0}\psi(a)=0$.
Hence, for each $\varepsilon>0$, $a$ can be chosen small enough to satisfy
\[
\psi(a)+\sqrt{\dfrac{2a}{\nu}}<\varepsilon.\qedhere
\]
\end{proof}
{
\begin{obs}
The constant $\frac{\pi}{2\sqrt{K_0}}$ in item (b) of the statement of Theorem \ref{M_nao_exist_MxR_Hxz} is essential for the technique we have used in the proof of Lemma \ref{M_nao_exist_MxR_estimativa_Hxz}. However, it seems that this constant can be improved to $\frac{\pi}{\sqrt{K_0}}$. 
\end{obs}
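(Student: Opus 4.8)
The plan is to treat the remark's two assertions separately, since the first concerns the limitations of the specific barrier construction while the second is a heuristic for what ought to be true.

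\textbf{Where $\frac{\pi}{2\sqrt{K_0}}$ is used.} I would first isolate the single point in the proof of Lemma \ref{M_nao_exist_MxR_estimativa_Hxz} at which the constant enters, namely Step 2. There the barrier $w=\sup_{\partial\W\setminus B_a(y_0)}u+\psi\circ\rho$ is made a supersolution ($\Q w<0$) by combining the spherical comparison $\Delta\rho\geq(n-1)\sqrt{K_0}\cot(\sqrt{K_0}\rho)$ with the requirement that $\psi$ be decreasing, convex, and solve $\frac{c}{t}\psi'^3+\psi''\leq 0$ for a \emph{positive} constant $c$. Because $\psi'\leq 0$, the first-order term $\frac{\psi'}{(1+\psi'^2)^{1/2}}\Delta\rho$ is favourable only where $\Delta\rho>0$, i.e. where $\cot(\sqrt{K_0}\rho)>0$; this is exactly the range $\rho<\frac{\pi}{2\sqrt{K_0}}$, and it is the vanishing of $\xi(t)=\sqrt{K_0}t\cot(\sqrt{K_0}t)$ at $t=\frac{\pi}{2\sqrt{K_0}}$ that makes $C$ (hence $c$) positive precisely on that interval. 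To establish essentiality I would check that once $\rho\geq\frac{\pi}{2\sqrt{K_0}}$ no positive $c$ is available, and that with $c\leq 0$ the inequality $\frac{c}{t}\psi'^3+\psi''\leq 0$ forces $\psi''\leq 0$, in conflict with the convexity P7 used to keep the graph of $\psi$ steep near $a$. Thus the radially symmetric convex barrier of Step 2 cannot be continued past the equator, which is the sense in which $\frac{\pi}{2\sqrt{K_0}}$ is essential for this technique.

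\textbf{Why $\frac{\pi}{\sqrt{K_0}}$ should suffice.} The next step is to argue that this obstruction is technical rather than geometric. The upper bound $K\leq K_0$ forces, by comparison with $S^n_{K_0}$, the absence of points conjugate to $y_0$ along the radial geodesics up to distance $\frac{\pi}{\sqrt{K_0}}$; combined with the standing hypothesis $\cut(y_0)\cap\W=\emptyset$, this keeps $\rho$ of class $\cl^2$ throughout $\W$ as soon as all distances to $y_0$ stay below $\frac{\pi}{\sqrt{K_0}}$ (the same phenomenon underlies the injectivity radius bound recorded after Corollary \ref{M_nao_exist_MxR_Hxz_positive_curvature}). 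Hence Step 1 and Proposition \ref{M_prop_gen_JS} are untouched, and only the sign of $\Delta\rho$ past the equator obstructs Step 2. The plan to remove it is to discard the crude positive bound $c/\rho$ in favour of the \emph{exact} comparison term and to build $\psi$ directly from the sharp barrier inequality
$$\frac{\psi''}{(1+\psi'^2)^{3/2}}+\frac{\psi'}{(1+\psi'^2)^{1/2}}(n-1)\sqrt{K_0}\cot(\sqrt{K_0}t)<0$$
on the whole interval $(a,\delta)$ with $\delta<\frac{\pi}{\sqrt{K_0}}$, relaxing the convexity requirement P7 so that $\psi$ may turn concave beyond $t=\frac{\pi}{2\sqrt{K_0}}$.

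\textbf{Main obstacle.} The difficulty I expect to dominate the work is exactly this sign reversal of $\cot(\sqrt{K_0}t)$ at the equator: there the first-order term passes from favourable to adverse, so a decreasing $\psi$ must be offset by a curvature term forced negative on $\left(\frac{\pi}{2\sqrt{K_0}},\delta\right)$, in tension with the steep blow-up $\psi'(a)=-\infty$ imposed at the inner sphere. Producing a single profile $\psi$ that is steep at $a$, monotone throughout, and satisfies the sharp inequality across the equator — or, alternatively, replacing the one-parameter family of geodesic spheres by a foliation by hypersurfaces of genuinely positive mean curvature reaching the antipodal distance, in the spirit of the entire constant-mean-curvature graphs Spruck employs in $\HH^n\times\R$ — is what I would need to carry out to turn the heuristic ``it seems'' into a theorem.
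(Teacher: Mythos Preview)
The paper offers no proof for this remark: it is stated as a bare observation immediately after the proof of Lemma \ref{M_nao_exist_MxR_estimativa_Hxz}, with the first sentence left as self-evident from Step 2 and the second sentence explicitly phrased as a conjecture (``it seems''). There is therefore nothing substantive to compare your proposal against.

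That said, your elaboration of the first sentence is accurate and tracks exactly the mechanism in the paper's Step 2: the positivity of $c$ comes from the positivity of $\xi(t)=\sqrt{K_0}\,t\cot(\sqrt{K_0}\,t)$ on $\bigl(0,\tfrac{\pi}{2\sqrt{K_0}}\bigr)$, and you correctly observe that once $c\leq 0$ the ODE constraint together with $\psi'\leq 0$ forces $\psi''\leq 0$, which is incompatible with P7 and the explicit choice \eqref{M_expres_psi}. This is precisely the sense in which the constant is ``essential for the technique''.

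Your treatment of the second sentence goes well beyond the paper, which ventures no argument at all. The heuristics you give --- that the conjugate-point bound under $K\leq K_0$ reaches $\tfrac{\pi}{\sqrt{K_0}}$, that Step 1 and Proposition \ref{M_prop_gen_JS} are unaffected, and that only the sign of $\Delta\rho$ in Step 2 is at stake --- are sound, and your identification of the main obstacle (building a profile $\psi$ that remains a supersolution across the equator where $\cot$ changes sign) is the right one. Just be aware that you are not reconstructing a proof the authors gave; you are sketching a possible attack on what they left open.
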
%
\begin{obs}
In the case where $H$ is a function that does not depends on the height variable, then the estimate \eqref{est_nao_exist_Hxz} becomes
$$
u(y_0) < \ds\sup_{\partial\W\setminus B_a(y_0)} u + \varepsilon.
$$
\end{obs}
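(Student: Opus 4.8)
The plan is to read off this simplified estimate directly from Lemma \ref{M_nao_exist_MxR_estimativa_Hxz}, exploiting the fact that when $H=H(x)$ carries no dependence on the height, the auxiliary parameter $k$ drops out of every hypothesis. First I would observe that the defect condition \eqref{cond_Serrin_negac_M_pos} reduces to $(n-1)\Hc_{\partial\W}(y_0)<nH(y_0)$, an inequality that no longer mentions $k$; hence it is satisfied for \emph{every} $k\in\R$, and consequently the conclusion \eqref{est_nao_exist_Hxz} is available for every choice of $k$.

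Next I would fix $\varepsilon>0$ and inspect how the radius $a$ delivered by the lemma depends on $k$. Tracing the construction in the proof, the constant $\nu$ of \eqref{M_condcomigual} is determined by $H(y_0)$ alone, the radius $R_1$ of \eqref{M_cond_H} is governed by the modulus of continuity of $x\mapsto H(x)$ at $y_0$, and the whole of Step 2 is purely geometric; none of these ingredients involve $k$. Therefore $a$ may be chosen depending only on $\varepsilon$, on $\Hc_{\partial\W}(y_0)$ and on the geometry of $\W$, uniformly in $k$. With such an $a$ frozen, and taken small enough that $\partial\W\setminus B_a(y_0)\neq\emptyset$, the quantity $S:=\ds\sup_{\partial\W\setminus B_a(y_0)}u$ is a fixed finite real number, since $u\in\cl^0(\overline{\W})$ and $\overline{\W}$ is compact.

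Finally, I would select any $k\leq S$, which is legitimate precisely because the hypothesis holds for all $k$. For this $k$ one has $\max\{k,S\}=S$, so substituting into \eqref{est_nao_exist_Hxz} yields $u(y_0)<S+\varepsilon=\ds\sup_{\partial\W\setminus B_a(y_0)}u+\varepsilon$, as claimed. I expect the only delicate point to be the verification that $a$ can be taken independent of $k$: this is what permits freezing $S$ before $k$ is chosen, and it follows transparently from the height-independence of $H$ in the two steps of the lemma's proof.
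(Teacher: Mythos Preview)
Your argument is correct and is precisely the natural justification the paper leaves implicit: this observation is stated in the paper as a remark with no accompanying proof, so there is nothing substantive to compare against. Your key point---that when $H=H(x)$ the quantities $\nu$, $R_1$, and hence $a$ in the lemma's statement and proof depend only on the modulus of continuity of $x\mapsto H(x)$ and not on $k$, so one may freeze $a$ and then send $k\to-\infty$ (or simply take $k\le S$)---is exactly the reason the $\max\{k,\cdot\}$ disappears.
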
}%

At last we are able to prove Theorem \ref{M_nao_exist_MxR_Hxz}.

\bigskip
\noindent\textit{Proof of the main non-existence theorem}.
Obviously we can suppose that $H\geq 0$. Then,
$$(n-1)\Hc_{\partial\W}(y_0)<nH(y_0,k)$$
for some $k\in\R$ since $H$ is non-decreasing in $z$. Let $\varepsilon>0$ and $\varphi\in\cl^{\infty}(\overline{\W})$ such that $\varphi=k$ in $\partial\W\setminus B_a(y_0)$ and $\varphi(y_0)=k+\varepsilon$. Hence, no solution of equation \eqref{operador_minimo_1} in $\W$ could have $\varphi$ as boundary values because such a function does not satisfy the estimate \eqref{est_nao_exist_Hxz}. \hfill \qedsymbol

\bibliographystyle{plain}
\nocite{artigoexist:inpress}
\bibliography{bibliografia}

\vfill

\noindent Yunelsy N Alvarez\\
Pontifícia Universidade Católica do Rio de Janeiro\\
Departamento de Matemática\\
Rio de Janeiro\\
CEP 22451-900\\
Brazil

\smallskip

\noindent Current Institution:\\
Universidade de São Paulo\\
Instituto de Matemática e Estadística\\
Departamento de Matemática\\
São Paulo\\
CEP 05508-090\\
Brazil

\smallskip

\noindent Email address: ynapolez@gmail.com; ynalvarez@usp.br

\bigskip\medskip

\noindent Ricardo Sa Earp\\
Pontifícia Universidade Católica do Rio de Janeiro\\
Departamento de Matemática\\
Rio de Janeiro\\
CEP 22451-900\\
Brazil\\
Email address: rsaearp@gmail.com\\
\end{document}